\newtheorem{theorem}{Theorem}[section]
\newtheorem{lemma}[theorem]{Lemma}
\newtheorem{remark}[theorem]{Remark}
\newtheorem{claim}[theorem]{Claim}
\newcommand{\R}{\mathbb{R}}
\newcommand{\C}{\mathbb{C}}
\newcommand{\dist}{\mathsf{dist}}
\newcommand{\oi}{{1\rightarrow\infty}}
\renewcommand{\P}{\mathbb{P}}
\newcommand{\F}{\mathcal{F}}
\newcommand{\B}{\mathcal{B}}
\newcommand{\e}{\varepsilon}
\renewcommand{\bm}[1]{\begin{bmatrix} #1\end{bmatrix}}
\title{On Non-localization of Eigenvectors of High Girth Graphs}
\author{Shirshendu Ganguly\thanks{sganguly@berkeley.edu;  Supported by a Miller Research Fellowship}\\UC Berkeley \and Nikhil Srivastava\thanks{srivastn@berkeley.edu; Supported by NSF Grant CCF-1553751 and a Sloan Research Fellowship.}\\{UC Berkeley}}
\date{\today}
\begin{document}
\maketitle
\begin{abstract} {We prove improved bounds on how localized an
	eigenvector of a high girth regular graph can be, and present examples
	showing that these bounds are close to sharp. This study was
	initiated by Brooks and Lindenstrauss \cite{bl} who relied on the
	observation that certain suitably normalized averaging operators on
	high girth graphs are hyper-contractive and can be used to approximate
	projectors onto the eigenspaces of such graphs. Informally, their
	delocalization result in the contrapositive states that for any $\e \in
	(0,1)$ and positive integer $k,$ if a $(d+1)-$regular graph has an
	eigenvector which supports $\e$ fraction of the $\ell_2^2$ mass on a
	subset of $k$ vertices, then the graph must have a cycle of size
	$\log_{d}(k)/\e^2)$, up to multiplicative universal  constants and additive logarithmic terms in $1/\e$. In this paper, we improve the upper bound to
	$\log_{d}(k)/\e$ up to similar logarithmic correction terms; and present a construction showing a lower bound
	of $\log_d(k)/\e$ up to multiplicative constants. Our construction is probabilistic and
	involves gluing together a pair of trees while maintaining high girth
	as well as control on the eigenvectors and could be of independent
	interest. }
 \end{abstract}
 
\section{Introduction}
Spectral graph theory studies graphs via associated linear operators such as
the Laplacian and the adjacency matrix.  While the extreme eigenvectors of these
operators are relatively well-understood and correspond to sparse cuts and colorings, much
less is known about the combinatorial meaning of the interior eigenvectors.
Most of the literature about them falls into two categories:\\

\noindent 1. {\bf Analysis of eigenvectors of random graphs.} For example, Dekel,
Lee, Linial  \cite{dekel} prove that any eigenvector of  a
{dense} random graph has a bounded number of nodal domains
i.e., connected components where the eigenvector does not
change sign. Following a sequence of results by various authors, in a recent
breakthrough work Bauerschmidt,  Huang, Yau \cite{yau}, among
various other things, show that with high probability, any `bulk'
eigenvector $v$ of a random regular graph with $n$ vertices and a large enough but fixed constant degree, is $\ell_{\infty}$
delocalized in the following sense: $$||v||_\infty\le
\frac{\log^C(n)}{\sqrt{n}}||v||_2,$$ where $||\cdot||_2$, and
$||\cdot||_\infty$ denote the usual $\ell_2$ and
$\ell_{\infty}$ norms respectively and $C$ is a constant.  For a more precise
statement see Theorem 1.2 in \cite{yau}. 
In another line of work, Backhausz and Szegedy  \cite{szegedy}  establish Gaussian behavior of the entry distribution of eigenvectors of random regular graphs by studying factors of i.i.d. processes on the regular infinite tree.

In all of these works the randomness of the model is used
		heavily, and weaker notions of delocalization are also
		considered (see e.g. \cite{geisinger}). We refer the reader to
		\cite{vusurvey} for a survey of recent developments on
		delocalization of eigenvectors of random matrices.\\

 \noindent 2. A parallel story based on {\bf asymptotic analysis of sequences
 of deterministic graphs}. The driving force for this is the so called  Quantum
 Unique Ergodicity (QUE) conjecture by  Rudnick and Sarnak
 \cite{rudnicksarnak}. The QUE conjecture states that on any compact negatively
 curved manifold all high energy eigenfunctions of the Laplacian
 equi-distribute. The conjecture is still widely open having been  verified in
 only a few cases; perhaps most notably for the Hecke orthonormal basis on an
 arithmetic surface by Lindenstrauss \cite{lindenstrauss}.
Brooks-Lindenstrauss \cite{bl} initiated the study of graph-theoretic analogues of this conjecture. The analogue of negatively
curved manifolds are high girth regular graphs --- the girth is defined as the length of the shortest cycle in a graph.
		Subsequently, Anantharaman and Le-Masson \cite{anantharaman}
		proved an asymptotic version of quantum ergodicity for regular expanders
		which converge  (in the Benjamini-Schramm local topology) to
		the infinite $d-$regular tree.  

The starting point of this paper is the beautiful result of \cite{bl}. Since the
statement is a bit technical and could be hard to parse at first read we
first explain the content informally in words.  The theorem roughly says that
if a graph does not have many short cycles, then eigenvectors cannot localize
on small sets: for any eigenvector, any subset of the vertices representing a
fraction of the $\ell_2^2$ mass must have size $n^\delta$ for some $\delta$
depending on the fraction. The condition of not having many cycles is articulated
as hyper-contractivity (i.e., control of $\|\cdot\|_{p\rightarrow q}$ norms for some $p<q$,  {a more formal definition is added at the beginning of Section \ref{sec12ub}})
of certain spherical mean operators on the graph, where $\|\cdot\|_{p\rightarrow q}$denotes the norm of the naturally associated operator from $\ell_p$ to $\ell_{q}.$
\begin{theorem}[\cite{bl}]\label{thm:bl}
	Suppose $G=(V,E)$ is a $(d+1)-$regular graph with adjacency matrix $A$. Let
	$$S_n(f)(x):=\frac1{d^{n/2}}\sum_{\dist(x,y)=n} f(y)$$
	and suppose
	\begin{equation}\label{eqn:sncondition}\|S_n\|_{p\rightarrow q}\le Cd^{-\alpha n}\end{equation}
%	 ( $\left\|\cdot\right\|_{p\rightarrow q}$ )
	for all $n\le N$, for some $1\le p\le q\le \infty$ and $\alpha\in [0,1]$. Then for any normalized  {$\ell_2$} eigenvector $v=(v_x)_{x\in V},$
	of $A$ and $S\subset V$ with $\|v_S\|_2^2:=\sum_{x\in S}v^2_x\ge \e,$
	$$|S|\ge \Omega_{d,C,\alpha}(\e^{\frac{2p}{2-p}}d^{\delta N}),$$
	where $\delta=2^{-7}\frac{\alpha p}{2-p}\e^2$ and  {the constant in the $\Omega$ notation depends on the parameters $d,C,\alpha$.}
\end{theorem}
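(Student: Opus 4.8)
The plan is to push the eigenvalue equation through the ``spherical function'' calculus on the $(d+1)$-regular tree and feed the hypothesis into it. Since $A$ is symmetric, $\lambda$ is real; I will treat the main case $|\lambda|\le 2\sqrt d$ and write $\lambda=2\sqrt d\cos\theta$, the range $|\lambda|>2\sqrt d$ being analogous and in fact easier because the relevant weights then grow. Recall that, restricted to any ball on which the graph is a tree (hence, globally, for $n$ below half the girth, which is the setting these estimates come from), each $S_n$ coincides with a fixed degree-$n$ polynomial $p_n(A)$ in $A$: one has $S_1=A/\sqrt d$, $S_1^2=S_2+(1+\tfrac1d)I$, and $S_1S_n=S_{n+1}+S_{n-1}$ for $n\ge 2$, which is exactly a three-term Chebyshev-type recursion. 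So, writing $\phi_\lambda(n):=p_n(\lambda)$, we get the mean-value identity $S_nv=\phi_\lambda(n)\,v$ for $n\le N$ and every eigenvector $v$. Solving the recursion gives $\phi_\lambda(n)=c_\lambda\cos(n\theta+\beta_\lambda)$ for $n\ge1$, from which I will only need two facts: (i) $|\phi_\lambda(n)|\le O(n)$ for every $n$ (near the band edges $\phi_\lambda$ grows roughly linearly rather than being bounded, but never faster), and (ii) $\sum_{n=N/2}^{N}\phi_\lambda(n)^2\ge\Omega_d(N)$, since a sinusoid cannot stay small on a window of length $N/2$ and the edge case only helps.

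Next I would build the smoothed propagator $T:=\sum_{n=N/2}^{N}\phi_\lambda(n)\,S_n$ and set $\Phi:=\sum_{n=N/2}^{N}\phi_\lambda(n)^2\ge\Omega_d(N)$. By the mean-value identity $Tv=\Phi\,v$, so $\tfrac1\Phi T$ fixes $v$; morally it is a ``hyper-contractive approximate spectral projector at $\lambda$''. The point of the window $[N/2,N]$ is that every term is exponentially contracting, so even the triangle inequality is good enough:
$$\|T\|_{p\to q}\le\sum_{n=N/2}^{N}|\phi_\lambda(n)|\,\|S_n\|_{p\to q}\le C\sum_{n=N/2}^{N}O(n)\,d^{-\alpha n}\le O_{d,C,\alpha}(N)\,d^{-\alpha N/2},$$
and dividing by $\Phi$ kills the polynomial factor: $\big\|\tfrac1\Phi T\big\|_{p\to q}\le O_{d,C,\alpha}(1)\,d^{-\alpha N/2}$. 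Thus $v$ is fixed by an operator whose $\ell_p\to\ell_q$ norm is exponentially small in $N$.

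Finally one must extract $|S|$, and this is where I expect the genuine work to be. Applying $\tfrac1\Phi T$ to $v$ gives $\|v\|_q\le O_{d,C,\alpha}(1)\,d^{-\alpha N/2}\|v\|_p$, which by itself does not bound $|S|$: $v$ may carry most of its $\ell_2^2$-mass far from $S$, spread so thin that $\|v\|_p$ is enormous. The resolution is a dichotomy on $\|v\|_p$. If $\|v\|_p$ exceeds a threshold $P_0$, then $v$ is so spread out that a direct truncation/pigeonhole argument forces any set of $\ell_2^2$-mass $\ge\e$ to be large on its own (truncate $v$ at level $\sim\sqrt{\e/|S|}$ and feed the truncation back into the hyper-contractive inequality). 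If $\|v\|_p\le P_0$, then combining $\|v\|_q\le O_{d,C,\alpha}(1)\,d^{-\alpha N/2}P_0$ with $\|v\|_q\ge\|v_S\|_q\ge |S|^{1/q-1/2}\sqrt\e$ (valid since $q\ge 2$ and $\|v_S\|_2^2\ge\e$) yields $|S|^{1/2-1/q}\ge\Omega_{d,C,\alpha}\big(\sqrt\e\,P_0^{-1}\,d^{\alpha N/2}\big)$. Optimizing $P_0$ and — this is the delicate bookkeeping — tracking the polynomial-in-$1/\e$ losses incurred when the computation has to be localized to a radius-$N$ neighbourhood of $S$ (whose cardinality is only controlled by $|S|\,d^{O(N)}$, which is exactly what forces the effective exponent to degrade from $\tfrac\alpha2$ to a quantity proportional to $\tfrac{\alpha p}{2-p}\e^2$) then gives $|S|\ge\Omega_{d,C,\alpha}\big(\e^{2p/(2-p)}d^{\delta N}\big)$ with $\delta=2^{-7}\tfrac{\alpha p}{2-p}\e^2$. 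Everything before this last step is soft; the crux is taming the delocalized tail of $v$ and obtaining the honest $\e$-dependence.
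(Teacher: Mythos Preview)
First, note that Theorem~\ref{thm:bl} is quoted from \cite{bl} and not re-proved here; the paper instead proves the sharper Theorem~\ref{thm:main} by the same overall scheme, so your proposal should be compared to that. Your operator $T=\sum_{n=N/2}^N\phi_\lambda(n)S_n$ and the bound $\|\tfrac1\Phi T\|_{p\to q}\lesssim d^{-\alpha N/2}$ are correct, but applying $T$ to $v$ itself is the wrong move: it yields only $\|v\|_q\lesssim d^{-\alpha N/2}\|v\|_p$, an inequality valid for \emph{every} eigenvector and hence blind to $S$. The dichotomy you sketch cannot fix this, since the only properties of $v$ you have used are $\|v\|_2=1$, $\|v_S\|_2^2\ge\e$, and that norm inequality---and the vector equal to $\sqrt\e$ at one vertex and $\sqrt{(1-\e)/(n-1)}$ elsewhere satisfies all three (for $p=1,q=\infty$, $N$ up to the girth scale) with $|S|=1$. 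Truncating and ``feeding back'' fails because the truncation is no longer an eigenvector, so neither $T$ nor the $S_n$ act on it in any useful way.

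The actual argument tests the operator against $v1_S$ rather than $v$: one bounds the quadratic form $\langle v1_S,K\,v1_S\rangle$ from above by $\|K\|_{1\to\infty}\|v1_S\|_1^2\le\|K\|_{1\to\infty}|S|\e$ (this is where $|S|$ enters) and from below by writing $v1_S=\e v+bw$ with $w\perp v$, which requires $\langle w,Kw\rangle\ge -1$, i.e.\ $K=f(A/2\sqrt d)$ with $f\ge -1$ on all of $\R$ in addition to $f(\lambda)\gtrsim 1/\e$. Your $T$ lacks this one-sided bound---the polynomial $\sum\phi_\lambda(n)p_n(x)$ can be arbitrarily negative at eigenvalues other than $\lambda$---so even switching to $v1_S$ would not save it. The Fej\'er-kernel construction of Lemma~\ref{lem:kphi} is exactly what supplies the missing positivity; the $\e^2$ in the exponent of Theorem~\ref{thm:bl} versus $\e$ in Theorem~\ref{thm:main} reflects a less efficient polynomial choice in \cite{bl}, not any delocalized-tail bookkeeping of the kind you outline.
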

In particular, the condition \eqref{eqn:sncondition} is
satisfied with $p=1,q=\infty,\alpha=1/2,C=d$ and $N=\lceil g/2\rceil -1$ for a
graph of girth $g$.  Viewed in the contrapositive, the theorem therefore says
that the existence of an eigenvector of $A$ with $\e$ fraction of its mass on $k=|S|$
coordinates implies that the graph must contain a cycle of length
$O(\log_d(k/\e)/\e^2)$. In fact, a close examination of the proof reveals
that it gives an upper bound which varies between $O(\log_d(k/\e)/\e)$ and
$O(\log_d(k/\e)/\e^2)$ depending on the Diophantine properties of the
eigenvalue being considered.

In this paper, we contribute to the understanding of this phenomenon in two ways..
First, we improve the above bound to $O(\log_d(k/\e)/\e)$ for all
eigenvalues of $d+1$-regular\footnote{We work with $(d+1)$-regular rather than $d-$regular graphs to avoid repeatedly writing $d-1$.} graphs, irrespective of the number theoretic
properties of the eigenvalue. The proof involves
replacing the approximation-theoretic component of their proof by a simpler and
more efficient method.  Specifically, we prove the following theorem in Section
2.

\begin{theorem}\label{thm:main} Suppose $G$ is a $(d+1)$-regular graph of girth $g$ and $v$ is a normalized
	eigenvector of the adjacency matrix of $G$. Then any subset $S$ with $\|v_S\|_2^2\ge \e$ must have
	$$|S|\ge \frac{d^{\e g/4}\e}{2d^2}.$$
\end{theorem}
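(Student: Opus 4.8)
The plan is to replace the approximation‑theoretic heart of \cite{bl} --- approximating the spectral projector onto $v$ by a polynomial in $A$ --- by a direct estimate through the spherical averaging operators together with a Christoffel‑type inequality, which is a much more hands‑on (and, one hopes, more efficient) object.

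\textbf{Reduction to spherical means.} Write $Av=\lambda v$ and put $N:=\lceil g/2\rceil-1$. Since every ball of radius $N$ in $G$ embeds isometrically in the $(d{+}1)$‑regular tree, the distance matrices $A_n$, defined by $(A_n)_{xy}=\mathbf 1[\dist(x,y)=n]$, satisfy $A_n=p_n(A)$ for $0\le n\le N$, where $p_0=1$, $p_1(t)=t$, $p_2(t)=t^2-(d+1)$ and $p_{n+1}(t)=t\,p_n(t)-d\,p_{n-1}(t)$ for $n\ge 2$; also the $n$‑sphere of each vertex has size $\sigma_n$ with $\sigma_0=1$ and $\sigma_n=(d+1)d^{n-1}$ for $1\le n\le N$. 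Equivalently, the normalized operators $S_n:=d^{-n/2}A_n$ are hyper‑contractive, $\|S_n\|_{1\to\infty}\le d^{-n/2}$, and $S_nv=\mu_n v$ with $\mu_n:=d^{-n/2}p_n(\lambda)$. In coordinates: for every vertex $x$ and every $0\le n\le N$,
\[
\sum_{\dist(x,y)=n} v_y \;=\; (A_n v)_x \;=\; p_n(\lambda)\,v_x .
\]

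\textbf{A pointwise Christoffel bound.} Fix a vertex $x$. The spheres $\{y:\dist(x,y)=n\}$, $0\le n\le N$, are pairwise disjoint, so Cauchy--Schwarz on the $n$‑th sphere gives $p_n(\lambda)^2 v_x^2\le\sigma_n\sum_{\dist(x,y)=n}v_y^2$; dividing by $\sigma_n$ and summing over $n$,
\[
v_x^2\,\Lambda_N(\lambda)\;\le\;\sum_{n=0}^{N}\ \sum_{\dist(x,y)=n} v_y^2 \;=\; \big\|v\!\restriction_{B_N(x)}\big\|_2^2 \;\le\; 1,\qquad \Lambda_N(\lambda):=\sum_{n=0}^{N}\frac{p_n(\lambda)^2}{\sigma_n},
\]
where $B_N(x)$ is the ball of radius $N$ about $x$. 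Hence $\|v\|_\infty^2\le 1/\Lambda_N(\lambda)$. Since $\|v_S\|_2^2=\e$ is carried by $|S|$ coordinates, some $x\in S$ has $v_x^2\ge\e/|S|$, so $|S|\ge\e\,\Lambda_N(\lambda)$ (the same conclusion follows by summing the pointwise bound over $x\in S$ and using $|\{x\in S:\dist(x,y)\le N\}|\le|S|$). Thus the theorem reduces to the single inequality
\[
\Lambda_N(\lambda)\;\ge\;\frac{d^{\e g/4}}{2d^2}\qquad\text{for every eigenvalue }\lambda\text{ of }A .
\]

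\textbf{The crux and the main obstacle.} Everything above is routine; all the content sits in this last estimate, i.e.\ in ruling out an eigenvalue whose ``Christoffel function'' $1/\Lambda_N(\lambda)$ is large enough for a small ball to support an $\e$‑fraction of an eigenvector. Because $\sigma_n\le(d+1)d^{n-1}$, it suffices to exhibit a single admissible degree $n_0$ (one expects $n_0\asymp\e g/4$) at which $|p_{n_0}(\lambda)|$ is within a bounded factor of the largest value a degree‑$n_0$ spherical polynomial can take on the spectrum; this immediately produces the factor $d^{\e g/4}$, and with the $\e$ out front and $\sigma_{n_0}$ in the denominator one gets the stated constant $1/(2d^2)$. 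This is easy when $\lambda$ sits at, or beyond, the edge $\pm2\sqrt d$ of the tempered band --- which is precisely the regime realized by the tree‑gluing construction in the companion lower bound --- since there $p_n(\lambda)$ grows in $n$ at the maximal exponential rate. The delicate case is $\lambda$ deep inside $(-2\sqrt d,2\sqrt d)$, where each individual term $p_n(\lambda)^2/\sigma_n$ is only $O(1)$ and no single spherical degree suffices; it must be handled more globally, by tracking $\Lambda_N(\lambda)$ through the recursion $p_{n+1}=t\,p_n-d\,p_{n-1}$ (equivalently through the oscillatory behaviour of the spherical functions $p_n(2\sqrt d\cos\theta)=d^{n/2}(\alpha e^{in\theta}+\bar\alpha e^{-in\theta})$) and by exploiting the freedom to choose the radius $N$, or the heavy vertex $x$, within a window. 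Obtaining this \emph{uniform}‑in‑$\lambda$ lower bound is where the argument is nontrivial: it is exactly the ingredient that \cite{bl} obtained via polynomial approximation, and replacing it by the present elementary route is what is responsible for the improved exponent $\e$ in place of $\e^{2}$ and for the removal of the $\log(1/\e)$ losses.
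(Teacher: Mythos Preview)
Your reduction is correct up to the displayed ``single inequality,'' but that inequality is false, and not for a technical reason: your route via the pointwise Christoffel bound is fundamentally too weak to reach the theorem. You yourself note that for tempered $\lambda=2\sqrt d\cos\theta$ each term satisfies $p_n(\lambda)^2/\sigma_n=O(1)$; but then $\Lambda_N(\lambda)=\sum_{n\le N}p_n(\lambda)^2/\sigma_n=O(N)=O(g)$, whereas you need $\Lambda_N(\lambda)\ge d^{\e g/4}/(2d^2)$, which is exponential in $g$ for any fixed $\e>0$. (Your heuristic ``pick $n_0\asymp\e g/4$ with $|p_{n_0}(\lambda)|$ near the maximum of a degree-$n_0$ spherical polynomial'' does not help: the maximum on $[-2\sqrt d,2\sqrt d]$ is itself $O(d^{n_0/2})$, so that single term contributes only $O(1)$ to $\Lambda_N$, not $d^{\e g/4}$.) Concretely, take $\e=1/2$: the theorem asserts $|S|\gtrsim d^{g/8}$, while your bound $|S|\ge\e\,\Lambda_N(\lambda)$ gives only $|S|\gtrsim g$. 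No amount of ``tracking the recursion'' or ``choosing the radius within a window'' can turn a sum of $O(N)$ bounded terms into something exponential in $N$; the quantity $\Lambda_N(\lambda)$ simply does not depend on $\e$, so it cannot encode the $\e$-dependent exponent.

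The paper's proof is structurally different in exactly the place where your argument stalls. Rather than bounding $\|v\|_\infty$ and deducing $|S|\ge\e/\|v\|_\infty^2$, it builds an $\e$-dependent polynomial $f$ (a Fej\'er-type kernel in the Chebyshev basis, with $m\asymp 1/\e$ frequencies spaced $r\asymp\e g$ apart) satisfying $f(\lambda)\gtrsim 1/\e$, $f\ge -1$ on $\R$, and $\|f(A/2\sqrt d)\|_{1\to\infty}\lesssim d^{-r/2}\asymp d^{-\e g/4}$. It then estimates the \emph{bilinear} quantity $\langle v\mathbf 1_S,\,f(A/2\sqrt d)\,v\mathbf 1_S\rangle$ two ways: from above via $\|f(A/2\sqrt d)\|_{1\to\infty}\,\|v\mathbf 1_S\|_1^2\le d^{-\e g/4}\cdot|S|\,\e$ (Cauchy--Schwarz on $\|v\mathbf 1_S\|_1$ is where $|S|$ enters), and from below via the spectral decomposition $v\mathbf 1_S=\e\,v+\sqrt{\e(1-\e)}\,w$, which gives $\ge\e^2$. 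Comparing yields the exponential factor. The point is that the operator applied has $\e$-dependent degree and the $1\to\infty$ norm is played off against $\|v\mathbf 1_S\|_1^2$, not against a single coordinate; this interaction is absent from your pointwise/$\ell_\infty$ scheme and is precisely what produces $d^{\e g/4}$.
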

The contrapositive of the above theorem implies that if there exists $\e$ and $k$ and $S$ such that $|S|=k$ and $\|v_S\|_2^2=\e,$ then  
$$g\le \frac{4\log_d(k/\e)+O(1)}{\e}.$$
Before proceeding further some remarks are in order.
\begin{remark}[Choice of Hypercontractive Norms] The paper \cite{bl} works with general $p\rightarrow q$ norms, but
	in this paper we will work solely with the $1\rightarrow\infty$ since it is
	reveals all of the ideas and is easier to interpret combinatorially. Our proof of 
	Theorem \ref{thm:main} can easily be modified to work with $p\rightarrow q$ norms, if desired.
\end{remark}
\begin{remark}[Entropy Bounds from Delocalization] As already observed in \cite[Corollary 1]{brooks}, it is quite
	straightforward to obtain a lower bound on the entropy of an
	eigenvector $v$ from a delocalization result such as Theorem
	\ref{thm:main}, where the entropy of $v$ is $-\sum_{x\in
	V}v_x^2\log_{d}v_x^2.$ 
\end{remark}
\begin{remark}[Tempered and Untempered Eigenvalues]\label{rem:tempered} Eigenvalues of $A$ in the
	interval $[-2\sqrt{d},2\sqrt{d}]$ are referred to as {\em tempered}
	(indicating wave-like behavior) and those outside are called {\em
	untempered} (indicating exponential growth) in the QUE literature.  {It
	is known that for all eigenvalues which are strictly untempered, i.e., bounded away from $\{-2\sqrt{d},2\sqrt{d}\}$, a much stronger delocalization result, with dependence
	roughly $g=O(\log_d(k/\e))$, can be proven using elementary arguments} --- see e.g.  \cite[Page
	59]{brooks} or the arguments of \cite{kahale}. Note that any sequence
	of graphs with girth going to infinity must have a vanishingly small
	fraction of untempered eigenvalues.  We will present bounds for arbitrary
	eigenvalues in this paper, without focusing on the distinction between
	tempered and untempered.  {A slightly more elaborate discussion is presented in Remark \ref{improve43}}.
\end{remark}

Moreover, for every $d\ge 2$, sufficiently large $k$, and $\e\in (0,1)$, we exhibit a $(d+1)-$regular 
graph with a localized eigenvector which has girth at least $\Omega(\log_d(k)/\e)$,
showing that our improved bound is sharp up to an additive $\log(1/\e)$ factor in the numerator, which is negligible
whenever $k=\Omega(1/\e^c)$ for any $c$.   We are able to construct
such eigenvectors for a dense subset of eigenvalues in $(-2\sqrt{d},2\sqrt{d})$. The proof is probabilistic, and
involves gluing together two trees without introducing any short cycles and
while controlling their eigenvectors, which may be of independent interest.
\begin{theorem}\label{thm:lb} 
	For every $d\ge 2$, sufficiently large $k$ and all $\e>0$, there is a finite $(d+1)-$regular graph $G$ with the following properties.
\begin{enumerate} \item $A_G$ has a normalized eigenvector $v$ with eigenvalue $\lambda\in (-2\sqrt{d},2\sqrt{d})$ and
				$$\|v_S\|_2^2=\Omega_\lambda(\e)$$
				for a set $S$ of size $k$, where the implicit constant $C_{\lambda}$ depends on $\lambda$ and is bounded away
				from zero on any subinterval of $(-2\sqrt{d},2\sqrt{d})$,  {i.e., $$\inf_{\lambda \in [-2\sqrt{d}+\delta,2\sqrt{d}-\delta]}C_{\lambda}> 0,$$ for any small enough positive $\delta.$}
		\item $G$ has girth at least 
			$$\Omega\left(\frac{\log_{d}(k)}{\e}\right).$$
	\end{enumerate}
	For every fixed $\e$ (or for every fixed, sufficiently large $k$), the set of eigenvalues attained by the above graphs is dense in $(-2\sqrt{d},2\sqrt{d})$.
\end{theorem}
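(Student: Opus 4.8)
\emph{Proof strategy.} The extremal example will consist of two copies of a truncated $(d+1)$-regular tree glued at their leaves, carrying an eigenvector that is radial about each root. Let $r_0$ be the largest integer for which the ball of radius $r_0$ in the $(d+1)$-regular tree has at most $k$ vertices (so $r_0=\log_d k-O(1)$), and set $D:=\lceil r_0/\e\rceil$. Let $T_1,T_2$ be rooted $(d+1)$-regular trees of depth $D$, with roots $\rho_1,\rho_2$, every non-leaf of degree $d+1$, and leaf sets $L_1,L_2$ of size $L=(d+1)d^{D-1}$, and form $G$ by adding \emph{any} $d$-regular bipartite graph $H$ between $L_1$ and $L_2$, which makes $G$ a $(d+1)$-regular graph. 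The key point is that the $(2D+2)$-dimensional space of functions that are radial about $\rho_1$ on $T_1$ and about $\rho_2$ on $T_2$ is $A_G$-invariant \emph{for every} choice of $H$: the adjacency equation at a leaf $a\in L_1$ reads $v(p(a))+\sum_{b\sim_H a}v(b)=\lambda v(a)$, and since all leaves of $T_2$ carry a common value, $\sum_{b\sim_H a}v(b)$ is independent of the combinatorics of $H$. On this subspace $A_G$ is conjugate, via $v\mapsto\sqrt{c_n}\,v$ with $c_n$ the number of vertices at level $n$, to a symmetric Jacobi matrix on the path $\rho_1=0_1,1_1,\dots,D_1,D_2,\dots,0_2=\rho_2$ of $2D+2$ vertices, whose off-diagonal weights are all $\sqrt d$ except the two end bonds (weight $\sqrt{d+1}$) and the middle ``seam'' bond (weight $d$).

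This Jacobi matrix is a rank-$O(1)$ perturbation of the free path with all weights $\sqrt d$, whose eigenvalues $2\sqrt d\cos(\pi j/(2D+3))$ become dense in $(-2\sqrt d,2\sqrt d)$; hence all but $O(1)$ of its eigenvalues are $O(1/D)$-close to these, so the in-band radial eigenvalues of $A_G$ are dense in $(-2\sqrt d,2\sqrt d)$ as $D\to\infty$ (e.g.\ $k\to\infty$ with $\e$ fixed), giving the density statement. Fix a compact subinterval $I$ and a radial eigenvalue $\lambda=2\sqrt d\cos\theta\in I$ with eigenvector $v$. Since the perturbation is bounded rank, away from the $O(1)$ defect sites the radialized eigenvector $\sqrt{c_n}\,v$ is a genuine plane wave $\sim c\sin(\theta n+\varphi)$ in the $T_1$-bulk with amplitude $|c|=\Theta_I(D^{-1/2})$ fixed by normalization (there is no exponential localization). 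Taking $S$ to be a set of size $k$ containing the ball of radius $r_0$ about $\rho_1$, one gets $\|v_S\|_2^2\ge\sum_{n\le r_0}(\sqrt{c_n}\,v)(n)^2\asymp_I r_0/D\asymp_I\e$, where the first estimate uses $\theta\in[\theta_0,\pi-\theta_0]$ and $r_0=\log_d k\to\infty$ so that $\sum_{n\le r_0}\sin^2(\theta n+\varphi)\asymp r_0$; this gives property (1), with the implied constant bounded away from zero on $I$.

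It remains to choose $H$ so that $G$ has girth $\Omega(D)=\Omega(\log_d(k)/\e)$, and here the freedom to use any $d$-regular bipartite interface is essential. I would take $H$ to be a union of $d$ independent uniform perfect matchings between $L_1$ and $L_2$. A first-moment computation bounds the expected number of cycles of length $\le c_0 D$ (for a small absolute constant $c_0$) by $d^{O(c_0 D)}=o(L)$: such a cycle alternates excursions into $T_1$ or $T_2$ --- each an up-down path between two leaves, of which there are about $d^{D+s}$ of length $2s$ --- with interface edges, each present with probability $\Theta(d/L)$, and the exponent in the resulting count is below $D$ once $c_0$ is small. One then removes these few short cycles by $2$-switches on the interface edges, routing the new edges into regions untouched by the remaining short cycles (possible as only $o(L)$ vertices are excluded); this destroys all short cycles without creating new ones, keeps $G$ $(d+1)$-regular, and keeps $H$ a $d$-regular bipartite graph between $L_1$ and $L_2$, so the radial analysis above is unaffected. (For $d=2$ the interface is a disjoint union of even cycles and the same argument applies.)

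The main obstacle is precisely this interplay in the last step: breaking short cycles appears to require breaking the symmetry that makes the radial subspace invariant. The resolution is the observation from the first paragraph --- the radial eigenspace, and hence the entire analysis of $v$ and $\lambda$, is insensitive to which $d$-regular bipartite interface is used --- so all of the girth engineering can be performed on $H$ with the eigenvector held fixed. A minor additional point is to verify that the weight-$d$ seam bond does not localize the in-band eigenfunctions, which follows from its being a rank-two perturbation.
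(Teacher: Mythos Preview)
Your argument is correct in outline and takes a genuinely different route from the paper's. Both constructions glue two truncated $(d+1)$-regular trees at their leaves, but you couple them by an arbitrary $d$-regular bipartite interface $H$ between $L_1$ and $L_2$, whereas the paper adds an extra layer of ``marked'' vertices, identifies them via a \emph{single} perfect matching, and then repairs the resulting degree-$2$ vertices by attaching high-girth ``degree-fixing gadgets'' (graphs with one vertex of degree $d-1$ and all others of degree $d+1$). The paper's eigenvector is then simply $f$ on $T_1$, $-f$ on $T_2$, and identically zero on the marked layer and all gadgets, where $f$ is a radial tree eigenvector; the eigenvector equation at a marked vertex holds because its unique $T_1$- and $T_2$-neighbors carry opposite signs and cancel. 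This buys an extremely clean mass analysis---their Lemma~\ref{lem:eigvec} is exactly the transfer-matrix computation you sketch, but applied to a single tree with no seam defect---at the price of needing an auxiliary existence result for the gadgets.

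Your observation that the radial subspace is $A_G$-invariant for \emph{every} $d$-regular bipartite $H$ is the key point that lets you dispense with gadgets entirely; the resulting graph is $(d+1)$-regular on the nose. The cost is that you must analyze a path Jacobi matrix with a seam bond of weight $d$ (rather than $\sqrt d$), and argue that in-band eigenvectors remain plane-wave-like across this defect. Your transfer-matrix reasoning for this is correct: for $\lambda=2\sqrt d\cos\theta$ with $\theta\in(0,\pi)$ the bulk transfer matrix is elliptic, and the reflection symmetry forces equal amplitudes on the two halves, so no localization occurs. The density of eigenvalues via bounded-rank interlacing is also fine. One point worth tightening in a full write-up: your interface $H$, unlike the paper's single matching, contains cycles by itself, so the first-moment count must include cycles that live entirely (or mostly) in $H$; this is handled by the standard fact that a random $d$-regular bipartite graph on $2L$ vertices has girth $\Theta(\log_d L)=\Theta(D)$ after $o(L)$ switches, but your sketch (``alternating excursions and interface edges'') implicitly assumes every cycle enters the trees. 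A union of $d$ matchings may also produce $O(1)$ multi-edges, to be removed along with the short cycles.
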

The proof of Theorem \ref{thm:lb} appears in Section \ref{lb123}.
Notice that the above theorem does not provide any bound as the
eigenvalue $\lambda$ approaches one of the edges $\pm 2\sqrt{d}$, which is 
consistent with Remark \ref{rem:tempered}.

\begin{remark}[Connections to other Notions of Delocalization] 
Various other notions of delocalization for eigenvectors have been studied --- $\ell_\infty$ delocalization
	as mentioned above, lower bounds on the $\ell_1$ norm, and ``no-gaps'' delocalization \cite{rv1,rv2, eldan2017braess} (see the surveys  \cite{vusurvey,rudsurvey} for details). 
Note that taking $\e=\frac{C\log_d\log_d(n)}{\log_{d}(n)}$ for a suitably large constant $C$ in Theorem \ref{thm:main} and $k=1$ implies an $\ell_\infty$ bound of $O(\sqrt{\frac{\log_d\log_d(n)}{\log_d(n)}})$ for any eigenvector of any $d+1-$regular graph with girth $\Omega(\log_d(n)).$ 
Moreover	the examples we construct in Theorem \ref{thm:lb} show that one cannot expect to do much better. This is a much weaker result than the known bounds for random $d+1-$regular graphs where the corresponding bound is $\tilde O\left({\frac{1}{\sqrt n}}\right)$ suppressing logarithmic terms (see \cite{yau}).  
This establishes that the delocalization properties of high girth graphs are weaker than those of random graphs.
\end{remark}

\begin{remark}\label{ana1}  {In \cite{anantharaman}, the authors prove a quantum ergodicity result (``most"
 eigenfunctions are delocalized in some quantitative sense) for large regular graphs which are expanders (i.e. the spectral gap of the associated random walk operator is bounded away from zero uniformly in the size of the graph) and have a few short cycles. A canonical model satisfying the above two properties are  uniformly chosen random regular graphs. A discussion about the results of this paper in the context of the results in \cite{anantharaman} is presented in Remark \ref{ana2}.}
\end{remark}

\subsection{Connection between localization and low girth : $\e=1$ case.}
Before proceeding to the proofs of these theorems, we give a quick proof of the upper bound in the extreme case $\e=1$, i.e., when the entire mass is supported on a small set, to give some intuition
about why a localized eigenvector implies a short cycle. Assume $G$ is a $(d+1)$-regular graph with adjacency matrix $A$ and $Av=\lambda v$ for 
a vector $v$ with exactly $k$ nonzero entries. Let $H$ be the induced subgraph of $G$ supported on the nonzero vertices. Observe
that for the eigenvector equation to hold for any vertex $s\notin H$, we must have 
 {$$\sum_{t\in V}v(t)A(s,t)=\sum_{t\in H}v(t)A(s,t)=0,$$}
so in particular any such $s$ must have at least two neighbors (of opposite signs for the value of $v$) inside $H$. Thus, for every edge $ts$ with $t\in H$ leaving $H$, there must be some $t'\in H$ such that $tst'$ is a path of length $2$ in $G$. Replace all such paths by new edges $tt'$ to obtain a graph $H'$ on the vertices of $H$ (possibly creating multi-edges). Note that  {every vertex in this graph has degree at least $(d+1)$} ( {the degree can in fact exceed $d+1$ if one edge  $ts$ is a part of many paths $tst'$}). Now, if $H'$ has girth $g$, then any ball
of radius $g/2-1$ does not contain cycles. Growing a ball from any vertex, we find that
$$ d^{g/2-1}\le |H'|\le k,$$
which implies that $g\le 2\log_d(k)+2.$ Since every edge in $H'$ corresponds to a path of length at most $2$ in $G$, $G$ must contain
a cycle of length at most $4\log_d(k)+4$.

Theorem \ref{thm:main} shows that this continues to happen even when
$\e=o(1)$. Note that since the girth of a $(d+1)$-regular graph on $n$
vertices is at most $O(\log_d(n))$ by a similar argument, the only interesting
regime is when $\e=\Omega(1/\log_d(n))$.

\section{Improved Upper Bound}\label{sec12ub}
In this section we prove Theorem \ref{thm:main}, at a high level following the approach of \cite{bl}.
The main ingredient is the following hyper-contractivity estimate. Just for completeness we include the following  definition of hyper-contractivity:
  {For any $1\le p < q \le \infty,$ an operator $S$ from $\ell_p$ to $\ell_q$ is said to be hyper-contractive if its operator norm $\|S\|_{p\to q}$ is bounded by $1$. }

Let $T_m$ be the Chebyshev polynomials of the first kind, i.e., $T_m(\cos \theta)=\cos (m\theta).$
\begin{lemma}[Hypercontractivity of Chebyshev Polynomials, \cite{bl}]\label{lem:cheby} If $A$ is a $d+1$-regular graph with girth $g$, then for all even $m<g/2$,
	$$\left\|T_m\left(A/(2\sqrt{d})\right)\right\|_{1\rightarrow\infty}= \frac{d-1}{2d^{m/2}}.$$	
\end{lemma}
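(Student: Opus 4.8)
The key observation is that $T_m(A/(2\sqrt d))$ can be expanded in terms of the spherical-mean operators $S_k$ defined in Theorem \ref{thm:bl}. On the infinite $(d+1)$-regular tree, the normalized adjacency operator acts on radial functions via a three-term recurrence whose polynomial solutions are (up to normalization) closely related to the Chebyshev polynomials, and the walk-counting structure means that $A^k$, restricted to vertices within distance $g/2$, is exactly the same as on the tree. Concretely, I would first establish the operator identity
\[
T_m\!\left(\tfrac{A}{2\sqrt d}\right) \;=\; \sum_{k=0}^{m} c_{m,k}\, S_k,
\]
valid as long as $m < g/2$ so that all the relevant balls are tree-like and no cycles interfere with the count. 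The coefficients $c_{m,k}$ come from expressing the Chebyshev polynomial in the basis of ``spherical polynomials'' of the tree; because $T_m(\cos\theta)=\cos(m\theta)$ has an explicit trigonometric form, these coefficients can be read off from the spherical function expansion on the tree, and only $c_{m,0}$ and the alternating pattern in $k$ of the same parity as $m$ survive.

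Next I would compute $\|S_k\|_{1\to\infty}$. Since $S_k(f)(x) = d^{-k/2}\sum_{\dist(x,y)=k} f(y)$, and for $k < g/2$ a sphere of radius $k$ around any vertex has exactly $(d+1)d^{k-1}$ vertices (the tree count), putting a unit mass on a single vertex $y$ and reading the value at $x$ with $\dist(x,y)=k$ gives $\|S_k\|_{1\to\infty} = d^{-k/2}$ for $k\ge 1$, and $\|S_0\|_{1\to\infty}=1$. Then the triangle inequality gives an upper bound $\|T_m(A/(2\sqrt d))\|_{1\to\infty} \le \sum_k |c_{m,k}| d^{-k/2}$, and I would check that this telescopes to $\frac{d-1}{2d^{m/2}}$. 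The matching lower bound is obtained by exhibiting an explicit near-optimal input: take $f = \delta_y$ for a single vertex $y$ and evaluate $T_m(A/(2\sqrt d))f$ at a vertex $x$ at distance exactly $m$ from $y$; because the ball of radius $m$ around $y$ is a tree, this value is governed purely by the tree computation and equals $\frac{d-1}{2d^{m/2}}$ (the coefficient of $\delta_x$ in the expansion), showing the bound is attained, hence an equality rather than an inequality.

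The main obstacle I anticipate is getting the combinatorics of the coefficients $c_{m,k}$ exactly right — in particular verifying that the alternating sum $\sum_k |c_{m,k}| d^{-k/2}$ collapses to the clean value $\frac{d-1}{2d^{m/2}}$, and confirming that the worst-case test function really is a single delta mass (one must rule out that some spread-out input does better under the $1\to\infty$ norm). A clean way to handle both the identity and the coefficient bookkeeping is to pass to the infinite tree $\mathbb{T}_{d+1}$: for $m < g/2$ the operator $T_m(A/(2\sqrt d))$ on $G$ agrees entrywise with its counterpart on $\mathbb{T}_{d+1}$ on any ball of radius $g/2$, so it suffices to prove the statement on the tree, where the spherical function machinery (or a direct induction on $m$ using the Chebyshev recurrence $T_{m+1} = 2x T_m - T_{m-1}$ translated into the $S_k$ recurrence) makes the computation transparent. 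The parity condition ``$m$ even'' presumably enters to ensure the diagonal term $c_{m,0}$ has the right sign so that no cancellation spoils the lower bound; I would track this carefully when setting up the base case of the induction.
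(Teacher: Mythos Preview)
Your overall plan---expand $T_m(A/(2\sqrt d))$ in the radial operators $S_k$ and read off the $1\to\infty$ norm---is essentially the original Brooks--Lindenstrauss argument via spherical functions, which the paper explicitly sets aside in favor of a different route through non-backtracking walks. The paper uses the identity $B^{(j)}=d^{j/2}\bigl(U_j-\tfrac1d U_{j-2}\bigr)(A/(2\sqrt d))$ together with $T_j=\tfrac12(U_j-U_{j-2})$ to obtain a two-step recurrence $T_m=\tfrac1d T_{m-2}+\tfrac12\bigl(d^{-m/2}B^{(m)}-d^{-(m/2-1)}B^{(m-2)}\bigr)$, and then telescopes over $m,m-2,\ldots,2$; since $B^{(j)}(u,v)=\delta_{j,\dist(u,v)}$ for $j<g/2$, the entries of $T_m(A/(2\sqrt d))$ drop out immediately without ever computing the coefficients $c_{m,k}$.

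Two concrete steps in your plan would actually fail as written. First, the triangle inequality $\|T_m(A/(2\sqrt d))\|_{1\to\infty}\le \sum_k |c_{m,k}|\,d^{-k/2}$ does \emph{not} collapse to $\tfrac{d-1}{2d^{m/2}}$: carrying it out gives something of order $m\cdot d^{-m/2}$, a factor of $m$ too large. The missing observation is that the $S_k$ have pairwise \emph{disjoint} supports (the $(x,y)$ entry of $S_k$ is nonzero only when $\dist(x,y)=k$), so the maximum entry of $\sum_k c_{m,k}S_k$ is exactly $\max_k |c_{m,k}|\,d^{-k/2}$, not the sum---no triangle inequality is needed, and this immediately gives both bounds at once. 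Second, your proposed lower-bound witness is wrong: the entry of $T_m(A/(2\sqrt d))$ at a pair $(x,y)$ with $\dist(x,y)=m$ equals $\tfrac{1}{2d^{m/2}}$, not $\tfrac{d-1}{2d^{m/2}}$. The maximum absolute entry is achieved at \emph{intermediate} even distances $2\le 2j\le m-2$ (or on the diagonal when $m=2$), where the entry is $-\tfrac{d-1}{2d^{m/2}}$. Once you make the disjoint-support observation and correct the location of the maximum, your approach goes through and is equivalent to what \cite{bl} does; the paper's non-backtracking argument is simply a shorter path to the same entry computation.
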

The proof appearing in \cite{bl} is based on a spectral decomposition in terms of spherical functions on trees. For completeness we give a quick proof of the above using connections to non-backtracking walks instead. 
\begin{proof}
Let $U_m(\cdot)$ defined by $U_m(\cos \theta)=\frac{\sin((m+1)\theta)}{\sin \theta}$ be the Chebyshev polynomials of the second kind. 
It is well known that  for any $m,$ (see for e.g. Section 2 in \cite{noga}) 
$$B^{(m)}=d^{m/2}\left(U_m(A/(2\sqrt d))-\frac{1}{d}{U_{m-2}(A/(2\sqrt d))}\right),$$ where for any pair of vertices $u,v \in V$, the entry
$B^{(m)}(u,v),$ is the number of non-backtracking walks of length $m$ between $u$ and $v$. 
At this point we use the following well known relation between the Chebyshev Polynomials of the first and second kind:
 $T_m=\frac{1}{2}\left(U_m-U_{m-2}\right).$ Putting the above together we get 
 \begin{equation}\label{recur1}
 T_{m}(A/(2\sqrt d))=\frac{1}{d}\left(T_{m-2}(A/(2\sqrt d))\right)+\frac{1}{2}\bigl(\frac{B^{(m)}}{d^{m/2}}-\frac{B^{(m-2)}}{d^{m/2-1}}\bigr).\end{equation}
Now note that $\left\|T_m\left(A/(2\sqrt{d})\right)\right\|_{1\rightarrow\infty}$ is nothing but the maximum entry of the corresponding matrix. 
Since $m<g/2$ by hypothesis, for all $j\le m$ and for all $u,v \in V$ we have $B^{(j)}(u,v)=\delta_{j,\dist(u,v)}$ where $\dist(u,v)$ is the graph distance between $u$ and $v.$  Summing \eqref{recur1} over $2, 4,\ldots, m$ after multiplying both sides of the equation corresponding to $m-2j$ by $\frac{1}{d^j}$ and using the last observation completes the proof.
\end{proof}

Using the above lemma, the next approximation result is at the heart of the proof of Theorem \ref{thm:main}. As will be clear soon, given any eigenvalue $\lambda_0$ of $A/(2\sqrt d)$, the proof of Theorem \ref{thm:main} demands the existence of a polynomial $f$, with the following two properties:
\begin{enumerate}
\item $f(A/(2 \sqrt d))$ is hyper-contractive. 
\item $f(\lambda_0)$ is large, and $f(\lambda)$ is not too negative for any other eigenvalue $\lambda.$ 
\end{enumerate} 
The key insight then is that $f(A/(2\sqrt d))$ in some approximate sense acts as a projector onto the $\lambda_0$-eigenspace of $A/2\sqrt{d}$, and at the same time is hyper-contractive. By analyzing the action of the operator $f(A/(2\sqrt d))$ on the corresponding
eigenvector one can then show that  the latter cannot be localized. 
The following lemma states that such a polynomial exists. It is in the proof of this lemma that we achieve the required estimates needed to improve the bounds in \cite{bl}.
\begin{lemma}[Hypercontractive Polynomial Approximation]\label{lem:kphi} If $A$ has girth $g$, then for all positive integers $r,m$ such that $r$ is even, $mr<g/2$, and   {all} $\lambda\in \R$ there exists a polynomial $f$ such that:
	\begin{enumerate}
		\item $f(\lambda)\ge \frac{m}{4}-1$.
		\item $f(x)\ge -1$ on $\R$.
		\item $\|f(A/2\sqrt{d})\|_{1\rightarrow\infty}\le \frac{2(d-1)}{d^{r/2}}$.
	\end{enumerate}
\end{lemma}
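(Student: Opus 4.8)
The plan is to build $f$ as a nonnegative combination of Chebyshev polynomials $T_{kr}$ for $k=0,1,\dots,m$, so that Lemma \ref{lem:cheby} controls the hypercontractive norm term by term while the ``projector'' behavior comes from the trigonometric identity for $T_{kr}$ at the eigenvalue. Concretely, write $\lambda = \cos\theta$ (if $|\lambda|>1$ we will handle it separately, see below) and consider
\[
f(x) = \sum_{k=0}^{m} a_k\, T_{kr}(x), \qquad a_k \ge 0,
\]
with the $a_k$ chosen so that $\sum_k a_k T_{kr}(\cos\theta) = \sum_k a_k \cos(kr\theta)$ is large. The natural choice is a Fejér-type kernel: take $a_k$ proportional to $(1 - k/m)$ times $\cos(kr\theta)$, or more simply use the fact that a Fejér kernel $\sum_k (1-|k|/m) e^{ik\phi} = \frac{1}{m}\big(\frac{\sin(m\phi/2)}{\sin(\phi/2)}\big)^2 \ge 0$. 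Setting $\phi = r\theta$ and shifting, one gets a polynomial in $\cos(r\theta) = T_r(\cos\theta)$, hence a polynomial in $x$ of degree $mr < g/2$, which is nonnegative on $[-1,1]$ and whose value at $x=\lambda$ is of order $m$ (the peak of the Fejér kernel), while each coefficient $a_k$ is $O(1/m)$ so that property 3 follows from Lemma \ref{lem:cheby} and the triangle inequality: $\|f(A/2\sqrt d)\|_{1\to\infty} \le \sum_k a_k \frac{d-1}{2d^{kr/2}} \le \frac{d-1}{2}\sum_k a_k \le \frac{2(d-1)}{d^{r/2}}$ after normalizing and using that the higher $k$ terms decay. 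I expect the normalization constants will need mild massaging to hit the exact constants $m/4-1$ and $2(d-1)/d^{r/2}$, but nothing deep.

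The key steps, in order, are: (i) fix the Fejér-type trigonometric kernel $K_m(\phi) = \frac{1}{m}\big(\frac{\sin(m\phi/2)}{\sin(\phi/2)}\big)^2$ and record $K_m \ge 0$, $K_m(0) = m$, and $\widehat{K_m}(k) = 1 - |k|/m$ for $|k|<m$; (ii) substitute $\phi = r(\arccos x - \theta)$ — or rather symmetrize to stay polynomial, using $\cos(kr\arccos x) = T_{kr}(x)$ — to obtain $f(x) = c\,[K_m(r(\arccos x - \theta)) + K_m(r(\arccos x + \theta))]$, which is manifestly a polynomial in $x$ of degree $\le mr$ and nonnegative on $[-1,1]$; (iii) evaluate $f(\lambda) \gtrsim m$ from the peak at $x = \lambda$; (iv) expand $f$ in the $T_{kr}$ basis, note all coefficients are nonnegative and bounded, and apply Lemma \ref{lem:cheby} (valid since $kr \le mr < g/2$ and we can arrange $kr$ even by taking $r$ even) to bound the $1\to\infty$ norm; (v) finally shift $f$ down by $1$ — or add a harmless constant — so that property 2 ($f \ge -1$ on all of $\R$, not just $[-1,1]$) holds. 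Step (v) is where the case $|\lambda|>1$, i.e. untempered $\lambda$, also must be addressed: there $T_{kr}(\lambda)$ grows, so a cleaner choice (e.g. a single high-degree $T_{mr}$, or the monomial-like polynomial $((x+\sqrt{x^2-1})^r + \dots)$) may be used, but since $T_{kr}\ge -1$ only on $[-1,1]$ we must ensure the combination stays $\ge -1$ everywhere on $\R$; because $T_{kr}$ is $\ge 1$ in absolute value and has the right sign outside $[-1,1]$, a nonnegative combination of $T_{kr}$ with $T_0 = 1$ included will automatically be $\ge$ some controlled negative constant, and absorbing constants fixes property 2.

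The main obstacle I anticipate is ensuring property 2 globally on $\R$ (not merely on $[-1,1]$) simultaneously with the sharp lower bound $f(\lambda) \ge m/4 - 1$ and the norm bound with the specific constant $2(d-1)/d^{r/2}$. On $[-1,1]$ nonnegativity is automatic from the Fejér construction, but outside $[-1,1]$ the Chebyshev polynomials $T_{kr}$ blow up, with alternating behavior controlled by parity of $kr$; since $r$ is even, all $T_{kr}$ are even-degree with $T_{kr}(x) \ge 1$ for $|x|\ge 1$, so a nonnegative combination is actually $\ge$ (sum of coefficients attached to $k\ge 1$) $\ge 0 \ge -1$ there — meaning property 2 should come essentially for free once $r$ is even, which is exactly why the hypothesis includes that $r$ be even. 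The remaining work is bookkeeping on constants: verifying the Fejér peak gives at least $m/4 - 1$ after the shift, and that $\sum_{k=0}^m a_k \cdot \frac{d-1}{2 d^{kr/2}}$, with $a_k \le C/m$ and a geometric tail in $d^{-kr/2}$, comes in under $2(d-1)/d^{r/2}$.
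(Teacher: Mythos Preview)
Your approach is essentially the paper's: build $f$ from a symmetrized, dilated Fej\'er kernel expanded in the $T_{jr}$ basis, then invoke Lemma~\ref{lem:cheby} term by term. However, one step is wrong as written. In step~(iv) you assert that ``all coefficients are nonnegative,'' but expanding $K_m(r(\psi-\theta))+K_m(r(\psi+\theta))$ in cosines gives $T_{jr}$-coefficients proportional to $(1-j/m)\cos(jr\theta)$, which are \emph{not} nonnegative for generic $\theta$. This does not harm property~2 on $[-1,1]$ (that follows from Fej\'er nonnegativity directly) nor property~3 (the triangle inequality only needs $|a_j|\le 2$), but it does break your argument for property~2 on $\R\setminus[-1,1]$: with mixed-sign coefficients, a combination of even-degree Chebyshev polynomials can be arbitrarily negative off $[-1,1]$. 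The paper repairs this by averaging once more with the \emph{unshifted} kernel (i.e., the $\theta=0$ case), so that the coefficient of $T_{jr}$ becomes $(1-j/m)(\cos(jr\theta)+1)\ge 0$; this halves the peak value, which is precisely why property~1 reads $m/4-1$ rather than $m/2-1$.

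A smaller point: the coefficients are $O(1)$, not $O(1/m)$ as you write. Property~3 does not rely on small coefficients; it follows because Lemma~\ref{lem:cheby} gives $\|T_{jr}(A/2\sqrt d)\|_{1\to\infty}=\tfrac{d-1}{2d^{jr/2}}$, which is geometrically decaying in $j$, and summing against bounded coefficients already yields $O((d-1)/d^{r/2})$.
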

\begin{figure}
	\centering
	\includegraphics[scale=.4]{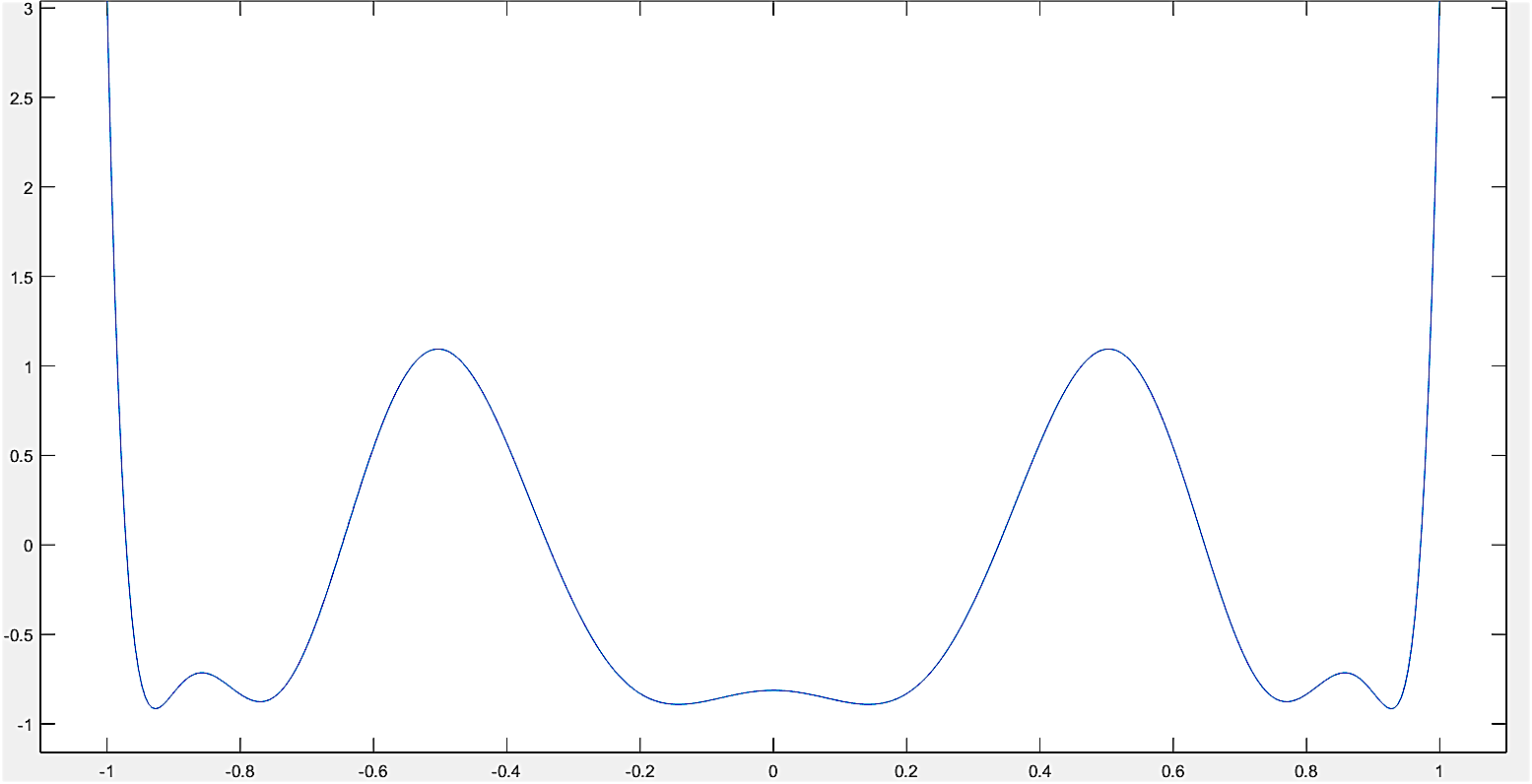}
	\caption{An example of the polynomial $f$ in Lemma \ref{lem:kphi}, with parameters $m=8,r=2,\phi=\pi/3$}
\end{figure}

\begin{proof}Assume first that $\lambda\in [-1,1]$. We will use the Fejer kernel of order $m,$
	$$F_m(\theta):=\sum_{j=-m}^m (1-|j|/m)e^{ij\theta}=1+2\sum_{j=1}^m(1-j/m)\cos(j\theta).$$
	 Recall that $F_m(\theta)\ge 0$ and $F_m(0)=m$.
	 Let $\lambda=\cos \phi$ for $\phi\in [0,\pi]$ and define
	$$ K_\phi(\theta):=\frac12\left(F_m(r(\theta-\phi))+F_m(r(\theta+\phi))\right)-1,$$
	and notice that $K_\phi(\theta)\ge -1$ and,
	\begin{equation}\label{eqn:kbig}K_\phi(\phi)\ge \frac{1}{2} (F_m(0)+0)-1=m/2-1,\end{equation}
	and
	\begin{align*}
		K_\phi(\theta) &= \sum_{j=1}^m(1-j/m)\cos(jr(\theta-\phi))+\cos(jr(\theta+\phi))\\
		&=\sum_{j=1}^m 2(1-j/m)\cos(jr\phi)\cos(jr\theta)\\
		&=2\sum_{j=1}^m (1-j/m)\cos(jr\phi)T_{jr}(\cos(\theta)).
	\end{align*}
	Let
	$$f(x):=\sum_{j=1}^m (1-j/m)(\cos(jr\phi)+1)T_{jr}(x),$$
	so that 
	$$f(\cos(\theta))=\frac{K_\phi(\theta)+K_0(\theta)}{2}.$$
	The first property is implied by \eqref{eqn:kbig} and $K_0(\theta)\ge -1$:
	$$f(\lambda)=K_\phi(\phi)/2+K_0(\phi)/2\ge \frac{m}4-1.$$
	The second property holds for $x=\cos(\theta)\in [-1,1]$ since $K_\phi(\theta),K_0(\theta)\ge -1$.
	For $x\notin [-1,1]$ we observe that $f$ is a nonnegative linear combination of Chebyshev
	polynomials of even degree, which are nonnegative outside $[-1,1]$.
	For the third property, we observe that:
	
	\begin{align*}\|f(A/2\sqrt{d})\|_{1\rightarrow\infty} 
		&\le \sum_{j=1}^m |(1-j/m)(\cos(jr\phi)+1)|\|T_{jr}(A/2\sqrt{d})\|_{1\rightarrow\infty}
		\\&\le 2(d-1)\left(\frac{1}{2d^{r/2}}+\frac1{2d^{2r/2}}+\ldots\right)
		\\&\le\frac{2(d-1)}{d^{r/2}},\end{align*}
	by Lemma \ref{lem:cheby}, 	as desired.

	If $\lambda\notin [-1,1]$, then we simply use the polynomial $f$ corresponding to $\lambda=1$ (which by symmetry
	is the same as the one for $\lambda=-1$). Properties (2) and (3) continue to hold, and property (1) holds
	because $f$ is a nonnegative linear combination of even degree Chebyshev polynomials, which are increasing on $[1,\infty)$
	and decreasing on $(-\infty,1]$. Thus for such an $f$ and $\lambda$ it follows that $f(\lambda)\ge f(1)=f(-1)\ge \frac{m}{4}-1.$
\end{proof}
We now finish the proof of Theorem \ref{thm:main}.
	\begin{proof}[Proof of Theorem \ref{thm:main}]
		Let $\lambda$ be an eigenvalue of $A/2\sqrt{d}$ with normalized eigenvector $v$. Let $f$ be the polynomial from Lemma \ref{lem:kphi} applied
		to $\lambda$, $m=\lceil 4/\e\rceil +4$, and $r=\lceil g/2m\rceil -1$ or $\lceil g/2m\rceil-2$, whichever is even. Taking $K=f(A/2\sqrt{d})$,
	we then have:
	\begin{equation}\label{eqn:upper}
	\langle v1_S,K v1_S\rangle \le \|K\|_\oi \|v1_S\|_1^2\le 2d\cdot d^{-\e g/4+1}\cdot\|v1_S\|_1^2\le 2d\cdot d^{-\e g/4+1}\cdot |S|\|v1_S\|_2^2 = 2d^{2-\e g/4} |S|\e,
	\end{equation}
	 since $r\ge \e g/8-2$, by Property (3) of Lemma \ref{lem:kphi} and $\|v1_S\|_1^2\le |S|\|v1_S\|_2^2$ (Cauchy-Schwarz inequality).

	On the other hand, decompose $v1_S$ as $av+bw$ where $w$ is a unit vector orthogonal to $v$ and $a,b$ are scalars. Observe that
	$$a=\langle v1_S,v\rangle=\|v_S\|^2=\e,$$
	and
	$$b^2 = \|v1_S\|^2-a^2=\e(1-\e).$$
	Since $\langle v, Kw\rangle=0$, we have: 
	\begin{align*}
		\langle v1_S, K v1_S\rangle &= a^2\langle v, K v\rangle + b^2\langle w, K w\rangle 
		\\& \ge a^2(1/\e) - b^2 \quad\textrm{by (1) and (2) of Lemma \ref{lem:kphi}}
		\\&= \e-\e(1-\e)=\e^2.
	\end{align*}
Combining this with \eqref{eqn:upper}, we obtain:
	$$|S|\ge \frac{d^{\e g/4}\e}{2d^2},$$
	as desired.
\end{proof}
 \begin{remark}[Improvement in the Untempered Case]\label{improve43}
	 The proof of Lemma \ref{lem:kphi} is clearly wasteful with regards to
	 eigenvalues of $A$ outside $[-2\sqrt{d},2\sqrt{d}]$; in particular,
	 noting that Chebyshev polynomials blow up exponentially outside
	 $[-1,1]$, one can considerably improve the approximation bound for
	 untempered $\lambda$ which are bounded away from the edge values $\{-2\sqrt{d},2\sqrt{d}\},$ (see Remark \ref{rem:tempered}), and obtain a
	 significantly stronger delocalization result in this case.
 \end{remark}

  { \begin{remark}\label{nogirth} Note that in the previous arguments, the
high girth assumption was only used in Lemma \ref{lem:cheby}, to prove a bound
on $\left\|T_m\left(A/(2\sqrt{d})\right)\right\|_{1\rightarrow\infty}$. However
the proof of the lemma shows that there is a lot of room to relax our
	assumptions. For example,   {let $G$} be a random regular graph of size $n$ and
	degree $d+1,$ a model also considered in \cite{anantharaman},   {for which} it is known
(for e.g. see \cite[Lemma 2.1]{lsregular}) that with high probability, there
exists at most one cycle in the $\rho=\frac{1}{5}\log_{d-1} n$ neighborhood of
any vertex. This implies that for any $j\le \frac{\rho}{2}$ and vertices
$u,v$, the number of non-backtracking walks from $u$ to $v$ of length $j$
($B^{(j)}(u,v)$ according to the notation in Lemma \ref{lem:cheby}) is at most
$2$. One can check that this implies Lemma \ref{lem:cheby} holds with the bound
$ \frac{d-1}{2d^{m/2}}$  replaced by  $\frac{O(m)}{d^{m/2}}.$ This 
yields a similar lower bound on $|S|$ as in the conclusion of Theorem
\ref{thm:main}, up to certain polynomial factors in $\e.$\end{remark}}

\section{Lower Bound}\label{lb123}
In this section, we prove the Theorem \ref{thm:lb}, which shows that the logarithmic dependence on $k$ and polynomial dependence on $\e$
in Theorem \ref{thm:main} are sharp up to a $\log(1/\e)$ term.

The starting point is to observe that certain eigenvectors of finite trees
already have good localization properties. For the remainder of the section, we
will refer to a complete tree of finite depth $D$ (i.e., $D+1$ {\em
levels} of vertices including the root which corresponds to level $0$) in
which every non-leaf vertex has degree $d+1$ as a {\em $d-$ary tree}. We will
pay special attention to eigenvectors of the adjacency matrices of $d-$ary
trees which are {\em radial}, which means that they assign the same value to
vertices in a given level.

We begin by recording some facts about eigenvalues and eigenvectors of  {rooted} $d-$ary
trees. Recall that the eigenvalues of a $d-$ary tree are contained in the
interval $(-2\sqrt{d},2\sqrt{d})$ \cite[Section 5]{hoory}. For our purposes we will
only consider radial eigenvectors, and we will refer to the corresponding eigenvalues as {\em radial eigenvalues}.

\begin{lemma}[Radial Eigenvalues]\label{sym1} For any positive integer $D\ge 2,$ $A_k$ the adjacency matrix of $T_D,$ a $d-$ary tree of depth $D,$ has exactly $D+1$ radial eigenvalues counting multiplicities. 
\end{lemma}

\begin{proof} Let the vector space of all the radial functions on the vertices of $T_D$ be called $\mathscr{S}_{\rm sym}.$ Clearly $\mathscr{S}_{\rm sym}$ has dimension $D+1.$  Let $n$ be the total number of vertices in $T_D.$ Thus $\mathscr{S}_{\rm sym}\subset \C^n.$ Now notice that $A_D$ keeps $\mathscr{S}_{\rm sym}$ invariant and because it is a self-adjoint operator  the same is true for its orthogonal complement $\mathscr{S}_{\rm sym}^{\perp}$. 
The proof now follows by conjugating $A_D$ by an orthogonal matrix $U$ whose first $D+1$ rows are formed by an orthonormal basis of $\mathscr{S}_{\rm sym}$ and the remaining rows are formed by 
	an orthonormal basis of $\mathscr{S}^{\perp}_{\rm sym}.$ This transforms $A_D$ into a block diagonal self adjoint matrix with two blocks of size $D+1$ and $n-(D+1)$ respectively.  The proof is now complete by standard spectral theory of self-adjoint operators.

\end{proof}

\begin{lemma}[Eigenvalues of $d-$ary Trees]\label{lem:dense} The set of all  {radial eigenvalues}
	 of any infinite sequence of distinct finite $d-$ary trees is dense in the interval
	$(-2\sqrt{d},2\sqrt{d})$.
\end{lemma}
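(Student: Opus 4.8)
The plan is to analyze the characteristic equation governing symmetric eigenvectors of a finite $d$-ary tree. A symmetric eigenvector is determined by the values $(a_0, a_1, \dots, a_D)$ it assigns to the $D+1$ levels, and the eigenvector equation $A v = 2\sqrt{d}\,\lambda v$ becomes a three-term recurrence relating $a_{j-1}, a_j, a_{j+1}$ — essentially the same recurrence that defines the Chebyshev-type polynomials, with boundary conditions at the root ($j=0$) and at the leaves ($j=D$). Writing $\lambda = \cos\theta$ for $\theta \in (0,\pi)$, the general solution of the bulk recurrence is a combination of $d^{-j/2}\cos(j\theta)$ and $d^{-j/2}\sin(j\theta)$ type terms (reflecting the $2\sqrt d$ normalization and the branching factor $d$), so the two boundary conditions reduce to a single transcendental equation of the form $g_D(\theta) = 0$, where $g_D$ is (up to smooth nonvanishing factors) $\sin((D+c)\theta) = 0$ or a close variant — concretely its zeros interlace the points $\frac{\pi k}{D+c}$.

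First I would make the recurrence and its solution precise: set up the level-recurrence from the adjacency action on a symmetric vector, substitute $a_j = d^{-j/2} b_j$ to remove the exponential factor, and observe that $b_j$ then satisfies the constant-coefficient recurrence $b_{j+1} + b_{j-1} = 2\lambda b_j$ in the bulk (a few special cases at the root and leaves to handle the degree being $d+1$ there versus $d$ internal neighbors). Next I would write the solution as $b_j = P_j(\lambda)$ for the appropriate orthogonal polynomials, impose the two boundary conditions, and extract the transcendental eigenvalue equation for $\theta$, showing that as $D \to \infty$ the solutions $\theta$ become dense in $(0,\pi)$ because the ``spacing'' between consecutive roots is $O(1/D) \to 0$. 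Pulling back through $\lambda = \cos\theta$, which is a continuous surjection $(0,\pi) \to (-1,1)$, and then rescaling by $2\sqrt d$, gives density of eigenvalues of the sequence of trees in $(-2\sqrt d, 2\sqrt d)$. Since the statement is about \emph{any} infinite sequence of distinct finite $d$-ary trees, and distinct such trees have distinct (and unboundedly large) depths, along any such sequence $D \to \infty$ and the argument applies.

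The main obstacle I anticipate is handling the boundary conditions cleanly — the root has degree $d+1$ connecting to $d+1$ first-level vertices but the internal branching is also $d+1$, while leaves have degree $1$; getting the exact form of the two boundary equations right (and making sure they don't conspire to pin $\theta$ away from a dense set, e.g. accidentally forcing $\theta$ into a lattice that misses an interval) requires care. One clean way around the messiness is to avoid computing the eigenvalues exactly and instead argue by a counting/interlacing argument: a finite $d$-ary tree of depth $D$ has roughly $d^D$ vertices, hence that many eigenvalues, all lying in the fixed interval $(-2\sqrt d, 2\sqrt d)$; but one can be more refined — the symmetric eigenvectors alone already number $D+1$, all with distinct eigenvalues in the interval, and a monotonicity/Sturm-oscillation argument (the $j$-th symmetric eigenvector has $j$ sign changes across levels) forces these $D+1$ eigenvalues to spread out so that consecutive ones differ by $O(1/D)$; letting $D \to \infty$ gives density. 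I would present whichever of these is shortest, most likely the explicit transcendental-equation route since the recurrence is classical.
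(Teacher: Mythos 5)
Your approach is genuinely different from the paper's. The paper proves the lemma by a soft, functional-analytic argument: it embeds the finite $d$-ary trees as induced subgraphs of the infinite $(d+1)$-regular tree $T$, assumes toward a contradiction that some interval $I = [\lambda-\eta, \lambda+\eta] \subset (-2\sqrt d, 2\sqrt d)$ is avoided by every $A_m$, notes that this gives a uniform resolvent bound $\|(\lambda I - A_m)^{-1}\| \le \eta^{-1}$, extracts a weak-$\ell_2$ limit of the vectors $v_m = (\lambda I - A_m)^{-1}P_m e_r$ via Banach--Alaoglu, and shows that the limit solves $(\lambda I - A_T)v = e_r$ in $\ell_2$, contradicting the known fact that no such $\ell_2$ solution exists for $\lambda$ in the spectrum of the infinite tree. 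Your route instead reduces to the three-term recurrence on level values (equivalently, the $(D{+}1)\times(D{+}1)$ Jacobi matrix with off-diagonal entries $\sqrt{d{+}1},\sqrt d,\ldots,\sqrt d$) and argues that the zeros of the resulting characteristic (transcendental) equation fill out $(0,\pi)$ as $D\to\infty$. Both work; the paper's argument is shorter and outsources the hard fact to a cited theorem about the resolvent of the infinite tree, while yours is self-contained and explicit, and in fact yields quantitative spacing information ($O(1/D)$ gaps) that the paper's compactness argument does not.

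One caution on your fallback. The claim that ``a monotonicity/Sturm-oscillation argument forces these $D+1$ eigenvalues to spread out so that consecutive ones differ by $O(1/D)$'' does not follow from Sturm oscillation alone: oscillation theory gives sign-change counts and Cauchy interlacing between truncations, but says nothing directly about gap sizes, and $D+1$ distinct real numbers in a bounded interval can certainly cluster. Your primary route (the explicit transcendental equation) does close this: after the substitution $b_j = d^{j/2}a_j$ the bulk recurrence is $2\cos\theta\, b_j = b_{j-1}+b_{j+1}$, the root condition fixes a phase $\phi(\theta)$ with $\tan(\phi\theta) = \frac{d+1}{d-1}\tan\theta$, and the leaf condition becomes an equation in $(D+\phi)\theta$ whose solutions in $\theta$ near any fixed $\theta_0$ are spaced $O(1/D)$ apart because the phase $\phi$ is bounded and smooth. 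A cleaner alternative to both the transcendental equation and the Sturm argument is to observe that your Jacobi matrix differs from the uniform tridiagonal matrix $\sqrt d\, J'$ (all off-diagonal entries $\sqrt d$, whose eigenvalues are exactly $2\sqrt d\cos(\pi k/(D+2))$) by a rank-two perturbation, so their eigenvalues interlace up to offset two; since the eigenvalues of $\sqrt d\, J'$ are manifestly dense in $(-2\sqrt d, 2\sqrt d)$ as $D\to\infty$, so are those of your tree.
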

\begin{proof} Let $T_1,T_2,\ldots,T_m,\ldots$ be an infinite sequence of $d-$ary trees.
	Let $T$ be the infinite $(d+1)-$regular tree with root $r$
	and observe that there are sets $S_1\subset S_2,\ldots$ such that $T_m$ is the induced
	subgraph of $T$ on $S_m$. Let $A_m$ be the adjacency matrix of $T_m$ and let $A$ be the adjacency matrix of $T$.
	Assume for contradiction that there is a closed interval
	$$I=[\lambda-\eta,\lambda+\eta]\subset (-2\sqrt{d},2\sqrt{d})$$ such
	that every $A_m$ has no  {radial} eigenvalues in $I$.
	
	We derive a contradiction with the fact \cite[Theorem 5.2]{hoory} that for $\lambda\in\mathrm{spec}(A)=[-2\sqrt{d},2\sqrt{d}]$,
	there is no vector $v\in\ell_2$ such that $(\lambda I - A_T)v=e_r$, where $e_r$ is the indicator of $r$. 
	Our assumption along with the invariance of $\mathscr{S}_{\rm sym},$ implies that  the operator $A_{\rm sym}(\lambda)$ which is the restriction of the operator $(\lambda I - A_{m})$ on the subspace $\mathscr{S}_{\rm sym}$ is invertible, and  
	$\|A_{\rm sym}(\lambda)^{-1}\|\le \eta^{-1},$ 
	for all $m.$ Now since $P_{m}e_r$ is a radial vector where $P_m:\ell_2\rightarrow \C^{S_m}$ is the restriction onto $\C^{S_m}.$, there must be a sequence of finite 
	dimensional radial vectors $\{v_m\}$ with $v_m\in\C^{S_m}$ such that 
	$$A_{\rm sym}(\lambda)v_m=P_mA_{\rm sym}(\lambda)P_m^Tv_m=P_me_r.$$
	Moreover, we have the uniform bound
	$$\|P_m^Tv_m\|=\|v_m\|\le \eta^{-1}$$ for all $m$. The Banach-Alaoglu Theorem implies that $\{P_m^Tv_m\}$ must have a weakly
	convergent subsequence; let $v\in\ell_2$ be the weak limit of this subsequence, and note that $v$ must be radial as well.
Moreover, $v$ must satisfy:
\begin{align*}	
 e_j^T(\lambda I - A)v &= \lim_{m\rightarrow\infty} e_j^TP_m^TP_m(\lambda I - A)P_m^Tv_m\\
&= e_j^TP_m^TA_{\rm sym}(\lambda)v_m \\
&= \lim_{m\rightarrow\infty} e_j^TP_m^TP_me_r=e_j^Te_r,
 \end{align*}
	for every vertex $j\in T$, so in fact we must have $(\lambda I - A)v=e_r$, which is impossible.
\end{proof}

The next lemma lower bounds the relative mass of an eigenvector on different levels of a tree. The key reason behind such a result is the propagation of mass across levels via the eigenvalue equation. 

\begin{lemma}[Eigenvectors of $d-$ary Trees]\label{lem:eigvec} Assume $d\ge 2$
	and let $T$ be a $d-$ary tree of depth $D$ with root $r$. Let $S_0=\{r\},S_1,\ldots,S_D\subset
	T$ be the vertices at levels $0,1,\ldots, D$ of the tree and let $v$ be a
	radial eigenvector of its adjacency matrix with eigenvalue 
	$\lambda=2\sqrt{d}\cos\theta\in (-2\sqrt{d},2\sqrt{d})$. Then every
	pair of adjacent levels has approximately the same total $\ell_2^2$ mass as the root:
	$$\Omega(\sin^2\theta) = \frac{\|v_{S_i}\|_2^2+\|v_{S_{i+1}}\|_2^2}{\|v(r)\|_2^2}=O(1/\sin^{2}\theta).$$
\end{lemma}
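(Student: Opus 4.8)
The plan is to collapse the problem onto a one-dimensional recursion. Since $v$ is symmetric it is determined by the scalars $a_0,a_1,\dots,a_D$ giving its common value on $S_0,\dots,S_D$, and $\|v_{S_i}\|_2^2=|S_i|a_i^2$ where $|S_0|=1$ and $|S_i|=(d+1)d^{i-1}$ for $i\ge 1$. Writing $Av=\lambda v$ at a vertex of level $i$ gives $\lambda a_0=(d+1)a_1$ at the root, $\lambda a_i=a_{i-1}+d\,a_{i+1}$ at an internal level $1\le i\le D-1$, and $\lambda a_D=a_{D-1}$ at a leaf. The substitution $c_i:=d^{i/2}a_i$, together with $\lambda=2\sqrt d\cos\theta$, converts the internal equations into the Chebyshev-type recursion $c_{i+1}=2\cos\theta\,c_i-c_{i-1}$ for $1\le i\le D-1$, the root equation into $c_1=\tfrac{2d\cos\theta}{d+1}c_0$, and yields the clean identities $\|v_{S_i}\|_2^2=\tfrac{d+1}{d}c_i^2$ for $i\ge1$ and $\|v_{S_0}\|_2^2=c_0^2$. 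Since $\tfrac{d+1}{d}\in(1,\tfrac32]$ for $d\ge 2$ and $\|v(r)\|_2^2=c_0^2$, and since $\theta\in(0,\pi)$ so $\sin\theta\neq 0$ (as $\lambda\in(-2\sqrt d,2\sqrt d)$), it suffices to show
\[
\Omega(\sin^2\theta)\,c_0^2 \;\le\; c_i^2+c_{i+1}^2 \;\le\; O(1/\sin^2\theta)\,c_0^2 \qquad (0\le i\le D-1),
\]
the pair $(S_0,S_1)$ being covered identically up to the harmless replacement of one weight $\tfrac{d+1}{d}$ by $1$.

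The key device is the discrete Wronskian of the recursion: set $W_i:=c_i^2+c_{i+1}^2-2\cos\theta\,c_ic_{i+1}$. Substituting $c_{i+2}=2\cos\theta\,c_{i+1}-c_i$ into $W_{i+1}$ and simplifying gives $W_{i+1}=W_i$, hence $W_i\equiv W_0$ for all $0\le i\le D-1$. Completing the square two ways,
\[
W_i=(c_i-\cos\theta\,c_{i+1})^2+\sin^2\theta\,c_{i+1}^2=(c_{i+1}-\cos\theta\,c_i)^2+\sin^2\theta\,c_i^2 ,
\]
so $W_i\ge \sin^2\theta\,\max(c_i^2,c_{i+1}^2)\ge \tfrac12\sin^2\theta\,(c_i^2+c_{i+1}^2)$, while $W_i\le c_i^2+c_{i+1}^2+2|c_ic_{i+1}|\le 2(c_i^2+c_{i+1}^2)$. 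Thus $\tfrac12 W_0\le c_i^2+c_{i+1}^2\le 2W_0/\sin^2\theta$ for every adjacent pair. Evaluating $W_0$ from the root condition $c_1=\tfrac{2d\cos\theta}{d+1}c_0$,
\[
W_0=c_0^2\Big(1-\tfrac{4d\cos^2\theta}{(d+1)^2}\Big)=\tfrac{c_0^2}{(d+1)^2}\big((d-1)^2+4d\sin^2\theta\big),
\]
so $\tfrac{(d-1)^2}{(d+1)^2}c_0^2\le W_0\le c_0^2$, and for $d\ge 2$ the constant $\tfrac{(d-1)^2}{(d+1)^2}$ lies in $[\tfrac19,1)$. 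Combining the displays gives $c_i^2+c_{i+1}^2\in[\Omega(1)\,c_0^2,\ O(1/\sin^2\theta)\,c_0^2]$; reinserting the $\tfrac{d+1}{d}$ weights proves the lemma, with the lower bound in fact $\Omega(1)\ge\Omega(\sin^2\theta)$.

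The routine parts are the translation of the eigenvalue equations, the square completions, and the arithmetic evaluating $W_0$. The one genuinely load-bearing observation is the conserved quantity $W_i$, together with the fact that the root boundary condition pins it between $\tfrac{(d-1)^2}{(d+1)^2}c_0^2$ and $c_0^2$: this is exactly what prevents two consecutive levels from both carrying vanishingly small $\ell_2^2$ mass. Without it the lower bound looks subtle, since the general solution is $c_i=c_0\cos(i\theta)+B\sin(i\theta)$ with $B=\tfrac{d-1}{d+1}c_0\cot\theta$, whose amplitude blows up as $\theta\to 0$, so that a priori some neighbouring pair $c_i,c_{i+1}$ could sit near a common zero. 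I therefore expect recognizing and exploiting the Wronskian to be the main obstacle; the remainder is bookkeeping.
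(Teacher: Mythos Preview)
Your proof is correct and rests on the same idea as the paper's: collapse to the scalar second-order recurrence $c_{i+1}=2\cos\theta\,c_i-c_{i-1}$ and exploit a conserved quadratic form of the transfer matrix. The paper diagonalizes the transfer matrix over $\mathbb{C}$ as $PDP^{-1}$ with $D$ unitary, uses $\|P^{-1}w_i\|$ as the invariant, and then converts back via the condition-number bounds $\|P\|\le 2$, $\|P^{-1}\|\le 1/\sin\theta$; you instead work directly with the real invariant $W_i=c_i^2+c_{i+1}^2-2\cos\theta\,c_ic_{i+1}$ and evaluate $W_0$ explicitly from the root boundary condition. Your execution is a bit more elementary and in fact strictly sharper on the lower side: because you compute $W_0=\tfrac{(d-1)^2+4d\sin^2\theta}{(d+1)^2}\,c_0^2\ge\tfrac{1}{9}c_0^2$, you obtain $c_i^2+c_{i+1}^2=\Omega(1)\,c_0^2$ rather than the paper's $\Omega(\sin^2\theta)\,c_0^2$, while your upper bound $O(1/\sin^2\theta)$ matches theirs.
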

\begin{proof} Suppose $v$ has value $x_i$ for all vertices in $S_i$, and for convenience 
	assume that the root has value $x_0=1$ (although this makes $v$ un-normalized). 
	The eigenvector equation at the non-leaf vertices yields the following quadratic recurrence:
	\begin{align*} \lambda x_0 &= (d+1)x_1,\\
	 \lambda x_i &= x_{i-1}+dx_{i+1}\quad 1\le i\le D-1,\\
	 \end{align*}
	which must be satisfied by any eigenvector (ignoring the boundary condition at the leaves).
	Since we are interested in the total $\ell_2^2$ mass at each level, it will be more convenient
	to work with the quantities 
	$$m_0=x_0=1\qquad\textrm{and}\qquad m_i = \sqrt{|S_i|}x_i = \sqrt{(d+1)d^{i-1}}x_i\quad 1\le i\le D,$$
	which satisfy $m_i^2 = \|v_{S_i}\|_2^2$. Rewriting the recurrence in terms of the $m_i$, we obtain:
	\begin{align*} m_1 &= \frac{\lambda}{\sqrt{d+1}} m_0,\\
		m_{i+1} &= \frac{\lambda m_i}{d}\cdot\sqrt{\frac{|S_{i+1}|}{|S_i|}} - \frac{m_{i-1}}{d}\cdot\sqrt{\frac{|S_{i+1}|}{|S_{i-1}|}} = \frac{\lambda}{\sqrt{d}}m_i - m_{i-1},\quad 1\le i\le D+1.
	 \end{align*}
	Letting $\lambda=2\sqrt{d}\cos\theta$ and writing the above in matrix form, we have
	$$w_{i+1}:=\bm{ m_{i+1} \\ m_i} = \bm{2\cos\theta & -1 \\ 1 & 0}\bm{m_i\\m_{i-1}} =: PDP^{-1}w_i,$$
	since the matrix above is diagonalizable for $\theta\neq 0,\pi$, with
	$$ D:= \bm{e^{i\theta} & 0\\ 0 & e^{-i\theta}},\quad P = \bm{1 & 1\\ e^{-i\theta} & e^{i\theta}}.$$
	Since $D$ is unitary we have $\|P^{-1}w_{i}\|=\|P^{-1}w_0\|$ for all $i$. Observe that
	$$\|P\|\le 2,\quad \|P^{-1}\|\le \frac{1}{\sin\theta}.$$
	Whence
	$$\frac{\|w_i\|}{\|w_0\|}\in [\frac{\sin\theta}{2},\frac{2}{\sin\theta}].$$
	Noting that $|m_1|\le 2|m_0|$ and squaring yields the claim.
\end{proof}

Let $T$ be a $d-$ary tree of depth $D$. Choosing $S$ to be the top $\lfloor \e D\cdot (d/d+1)\rfloor$ levels of $T$ and applying the above lemma to any eigenvector with eigenvalue bounded away from $\pm 2\sqrt{d}$, we find that $\|v_S\|_2^2=\Theta(\e)$ and
$|S|=O((d+1)^{\e D})=O(n^\e)$, where $n$ is the number of vertices
in the tree. This is exactly the kind of localization we want for our lower bound.
Unfortunately, finite $d-$ary trees are not regular because they
have leaves. The rest of this section is devoted to showing that we can
nonetheless embed these trees in $(d+1)-$regular graphs without disturbing
their eigenvectors or creating any short cycles, thereby establishing Theorem
\ref{thm:lb}. The main device in doing this is the following lemma which shows that it is possible
to identify the leaves of two trees in a manner which does not introduce short cycles.

\begin{lemma}[Pairing of Trees]\label{lem:pairing} Suppose $T_1$ and $T_2$ are two $d-$ary trees of depth $D$, each with
	$n=(d+1)d^{D-1}$ leaf vertices $V_1$ and $V_2$. Then for sufficiently large $n$ there is a bijection $\pi:V_1\rightarrow V_2$
	such that the graph obtained by identifying $v$ with $\pi(v)$ (i.e., the new graph is obtained by first adding an edge between a vertex $v$ and $\pi(v)$ for $v\in V_1$ and then contracting the added edges.) has girth at least $\log_d(n)/4$\end{lemma}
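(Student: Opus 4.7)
My plan is to prove Lemma \ref{lem:pairing} via the probabilistic method with alteration. The plan is to sample $\pi: V_1 \to V_2$ uniformly at random, bound the expected number of short cycles in the resulting glued graph $G_\pi$, and then argue that with positive probability $\pi$ can be locally modified to destroy all remaining short cycles.

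First I would analyze the structure of cycles in $G_\pi$. Each shared leaf (formed by identifying $v \in V_1$ with $\pi(v) \in V_2$) has exactly two neighbors in $G_\pi$---the parent of $v$ in $T_1$ and the parent of $\pi(v)$ in $T_2$---so any cycle must alternate between the two trees, passing through an even number $2m$ (with $m\ge 1$) of shared leaves in cyclic order, and consisting of $m$ geodesic paths in $T_1$ interleaved with $m$ in $T_2$. Encoding a potential cycle by the sequence $(v_1,w_1,\ldots,v_{2m},w_{2m})$ with $v_i \in V_1$, $w_i = \pi(v_i) \in V_2$, the $T_1$-segments joining $v_{2j-1}$ to $v_{2j}$, and the $T_2$-segments joining $w_{2j}$ to $w_{2j+1}$, I would use the elementary bound $N_j(\ell) \le n \cdot d^{\ell/2}$ on the number of ordered leaf pairs in a $d$-ary tree at distance $\ell$, the probability $\Pr[\pi(v_i)=w_i \text{ for all } i \le 2m]=(n-2m)!/n! \le n^{-2m}$, and the Vandermonde identity applied to compositions of the total $T_1$- and $T_2$-lengths, to obtain
$$ E[\#\text{cycles of length } g \text{ in } G_\pi] \;\le\; \frac{(2d)^{g/2}}{16}.$$
Summing over $g \le g_0 := \lfloor \log_d(n)/4 \rfloor$ gives an expected total of $O((2d)^{g_0/2}) = O(n^{1/4})$ short cycles.

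Next I would apply Markov's inequality: with probability at least $1/2$, $G_\pi$ contains at most $O(n^{1/4})$ short cycles, involving a set $B \subset V_1$ of at most $|B| \le g_0 \cdot O(n^{1/4}) = O(n^{1/4}\log n) = o(n)$ shared leaves. Delete the pairings $v \mapsto \pi(v)$ for $v \in B$, yielding a partial bijection $\pi_0: V_1 \setminus B \to V_2 \setminus \pi(B)$ for which the resulting partial glued graph has no cycle of length $\le g_0$. Then I would extend $\pi_0$ to a full bijection by pairing $B$ with $\pi(B)$ via a matching whose edges avoid creating short cycles. Existence of such a matching follows from Hall's theorem applied to the bipartite graph of allowed edges: for each $v \in B$, the same counting as in the first step (restricted to cycles through a prescribed leaf) bounds the number of forbidden partners in $\pi(B)$ by $O(n^{1/8}\log n)$, which is much smaller than $|B|$, so Hall's condition holds.

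The main obstacle is that a matching on $B$ might introduce cycles which use \emph{two or more} of the newly chosen pairings simultaneously---these are not captured by the per-vertex forbidden-edge count. To close this gap I would either iterate the alteration (each round strictly reduces $\#$ short cycles, since any remaining short cycle contains at least one pairing in the updated bad set), or more cleanly apply Janson's inequality directly to the event ``$G_\pi$ has no short cycle'', obtaining $\Pr \ge \exp(-\mu - \Delta)$ with $\mu = O(n^{1/4})$ and $\Delta$ of the same order as $\mu$ (since two cycle-presence events are positively correlated only when they share a pairing, and the number of such cycle pairs is controlled by the same tree-path counting). This yields $\Pr[\mathrm{girth}(G_\pi)\ge g_0]>0$ for sufficiently large $n$, producing the desired bijection.
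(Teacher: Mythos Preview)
Your first-moment calculation is essentially correct and parallels the paper's Lemma~\ref{lem:overlap}(2). The difficulty is exactly where you locate it --- completing the alteration --- and neither of your two proposed fixes closes it.

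The inequality $\Pr[\text{no short cycle}]\ge\exp(-\mu-\Delta)$ is not available here. Janson's inequality goes the \emph{other} way: it upper-bounds $\Pr[\text{none}]$, which is useless for showing this probability is positive. The companion lower bound one would want is Harris/FKG, but that requires a product measure in which the complementary events are monotone; a uniform random bijection has no such product structure, and there are no natural independent coordinates in which ``cycle $C$ absent'' is a decreasing event. The iteration proposal is also unjustified: once you re-match $B$ you have no a priori control on how many \emph{new} short cycles appear, so there is no reason the count must strictly decrease from round to round.

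The paper closes the gap by a switching argument \`a la McKay--Wormald--Wysocka, and the crucial additional structural input is Lemma~\ref{lem:overlap}(1): with constant probability the short cycles are not only few but pairwise \emph{matching-edge disjoint} (this is a second union bound, over pairs of short cycles sharing an edge). One then stratifies the good matchings by their short-cycle profile $\Gamma(z_2,\ldots,z_L)$ and lower-bounds the ratio $|\Gamma(\ldots,z_k-1,\ldots)|/|\Gamma(\ldots,z_k,\ldots)|$ by comparing forward switchings (swap one matching edge of a chosen $k$-cycle with an edge at distance $>2L$ lying in no short cycle; edge-disjointness guarantees this destroys exactly one short cycle and creates none) against backward switchings. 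Iterating these ratio bounds from a nonempty stratum to $\Gamma(0,\ldots,0)$ shows the latter is nonempty. If you want to rescue your alteration approach without the full counting machinery, the missing ingredient is precisely this edge-disjointness: once you have it, you can pick one matching edge per short cycle and greedily swap each with an edge at distance $>2L$ from every short cycle and from every previously chosen swap partner; since there are only $o(n)$ such constraints, suitable edges always exist, and disjointness ensures each swap kills exactly one short cycle while creating none.
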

\begin{figure}
\centering
\includegraphics[scale=.6]{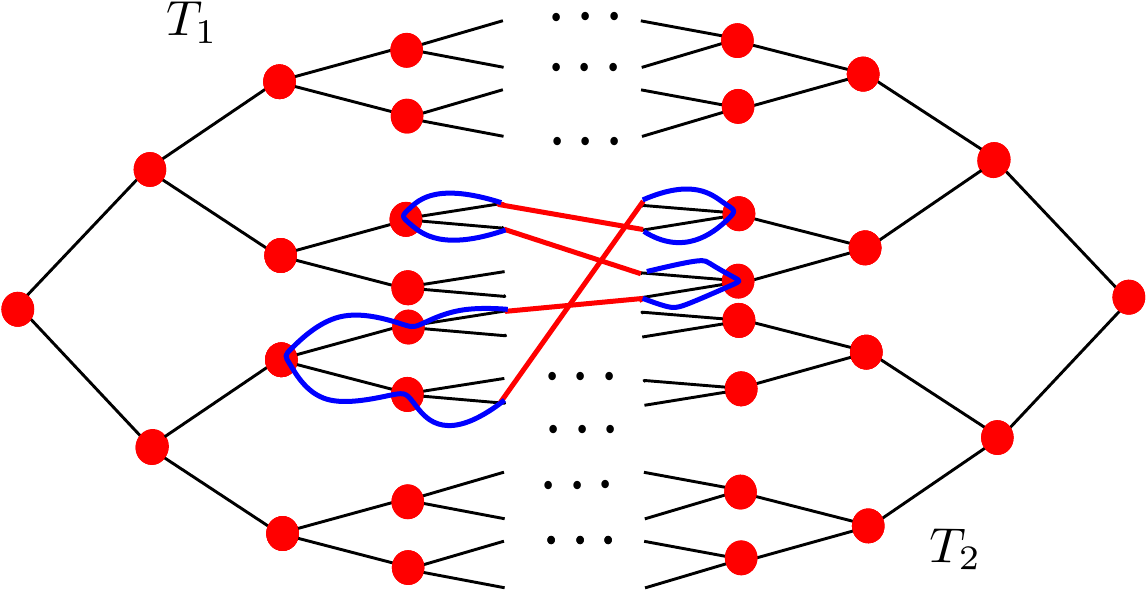}
\caption{This describes the construction of gluing two 3- regular trees by a random matching of the leaves. A cycle in the glued graph is illustrated where the blue paths denote the excursions into the trees and the red edges denote the jump from one tree to the other. However in the picture note that all the interior vertices in the trees  have degree three but the roots only have degree two (since the third edge is not significant for the purposes of the illustration, it was omitted just to avoid cluttering in the figure).}
\label{pairing}
\end{figure}
We defer the proof of this lemma. 
However such a graph obtained is still not regular. The second ingredient is:
\begin{lemma}[Degree-Fixing Gadget]\label{lem:gadget} For every degree $d\ge 3$ and sufficiently large $n$, there is a graph $H$ on $n$ vertices with the following properties:
	\begin{enumerate}
		\item $H$ has one distinguished vertex of degree $d-2$ and the remaining vertices have degree $d$.
		\item $H$ has girth at least $\log_{d-1}(n)/3$.
	\end{enumerate}
\end{lemma}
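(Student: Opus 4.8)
The goal is to produce, for every $d \ge 3$ and all large $n$, a graph $H$ on $n$ vertices, with one special vertex of degree $d-2$, all other vertices of degree $d$, and girth $\Omega(\log_{d-1} n)$. The natural approach is probabilistic: take a random $d$-regular graph $H_0$ on $n$ vertices (via the configuration model), which is well known to have girth $\Omega(\log_{d-1} n)$ with probability bounded away from $0$ — indeed, the expected number of cycles of length up to $c\log_{d-1} n$ is $o(1)$ for small enough $c$, since short cycles are rare by a first-moment computation on the configuration model. Then I would modify $H_0$ locally to drop the degree of a single vertex by $2$.

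The modification step is the one requiring a small idea, since simply deleting edges incident to one vertex lowers the degree of its neighbors too. The cleanest fix: pick a vertex $u$ and two of its neighbors $a,b$; delete the edges $ua$ and $ub$. Now $u$ has degree $d-2$ (good), but $a$ and $b$ each have degree $d-1$. To repair $a$ and $b$, add the edge $ab$ — this restores both to degree $d$. The net effect is that $u$ alone has degree $d-2$ and everyone else has degree $d$. One must only check that adding $ab$ does not create a short cycle: if $\dist_{H_0}(a,b)$ is large, the new edge $ab$ together with the old $a$–$b$ path is a long cycle, and any other new short cycle through $ab$ would give a short $a$–$b$ path in $H_0 \setminus \{ua, ub\}$, hence a short cycle or short path in $H_0$. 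So I want to choose $u, a, b$ so that $a$ and $b$ are far apart in $H_0$; equivalently, choose $u$ and two neighbors whose local neighborhoods do not overlap. In a graph of girth $\ge g_0 := \Omega(\log_{d-1} n)$, \emph{any} two distinct neighbors $a,b$ of $u$ automatically satisfy $\dist(a,b) \ge g_0 - 2$ (a short $a$–$b$ path plus $aub$ would be a short cycle), so in fact no special choice is needed — any $u$ and any two of its neighbors work, and deleting $ua,ub$ while adding $ab$ preserves girth $\Omega(\log_{d-1} n)$ up to an additive constant.

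Concretely, the steps are: (i) fix a constant $c>0$ small enough that a random $d$-regular (configuration model) graph on $n$ vertices has girth $\ge 3 c \log_{d-1} n + 10$ with probability bounded below; this is a standard first-moment estimate. (ii) Condition on this event and on simplicity of the graph (also a positive-probability event), obtaining a simple $d$-regular $H_0$ of girth $\ge 3c\log_{d-1} n + 10$. (iii) Pick any vertex $u$ with neighbors including $a \ne b$; form $H$ from $H_0$ by deleting $ua$, $ub$ and adding $ab$. (iv) Verify the degree sequence is as claimed and that $\mathrm{girth}(H) \ge \mathrm{girth}(H_0) - O(1) \ge c\log_{d-1} n / 3$ once $n$ is large, by the argument above that every cycle of $H$ using the edge $ab$ corresponds to a walk of comparable length in $H_0$, and cycles avoiding $ab$ are already in $H_0$. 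The main (and only real) obstacle is the girth bookkeeping after the local surgery — making sure the single added edge cannot shortcut a cycle — which is handled by the observation that neighbors of a common vertex in a high-girth graph are automatically far apart.
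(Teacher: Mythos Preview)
Your plan is essentially the paper's proof: obtain a $d$-regular graph of girth a bit more than $\log_{d-1}(n)/3$, pick any vertex $v$ with two neighbors $u_1,u_2$, delete $vu_1,vu_2$ and add $u_1u_2$. The paper's girth bookkeeping is even shorter than yours: this surgery replaces a length-$2$ path by a single edge, so every cycle shortens by at most $1$.

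The one point that needs fixing is your step (i). The expected number of cycles of length $\le c\log_{d-1} n$ in the configuration model is \emph{not} $o(1)$: already the expected number of $\ell$-cycles is $\sim (d-1)^\ell/(2\ell)$, and summing over $\ell\le c\log_{d-1} n$ gives something of order $n^{c}/\log n$, which diverges for every $c>0$. So a bare first-moment argument does not yield positive probability of high girth, and in any case that probability is not bounded away from zero as $n\to\infty$ (it decays like $\exp(-\Theta(n^{c}))$). The paper sidesteps this by quoting the asymptotic enumeration of \cite{MWW}, which shows that the number of $d$-regular graphs of girth $\ge g$ is positive whenever $(d-1)^{2g-1}=o(n)$, and then takes $g=\log_{d-1}(n)/3+1$. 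Any other existence result for high-girth regular graphs (e.g.\ the Erd\H{o}s--Sachs construction) would serve equally well here; you just cannot get it from the first moment alone.
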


\begin{proof} According to Corollary 2 of \cite{MWW}, the number of $d-$regular graphs with girth at least $g$ is asymptotic to
	$$ \frac{(dn)!}{(dn/2)!2^{dn/2}(d!)^n}\cdot\exp\left(-\sum_{r=1}^g\frac{(d-1)^r}{2r}+o(1)\right) \ge \exp\left(dn/2-g(d-1)^r+o(1)\right),$$
	whenever $(d-1)^{2g-1}=o(n).$ Taking $g=\log_{d-1}(n)/3+1$ we find that
	this condition is satisfied and the right hand side is positive for
	large enough $n$.  Let $G$ be a $d-$regular graph on $n$ vertices with
	girth at least $g$. Let $v$ be any vertex of $G$ and let $u_1,u_2$ be
	two of its neighbors. Let $H$ be the graph obtained by deleting the
	edges $vu_1$ and $vu_2$ and adding the edge $u_1u_2$. Observe that $v$
	has degree $d-2$ and every other vertex has degree $d$ in $H$.
	Moreover, since we replaced  a path of length $2$ by an edge, the length
	of every cycle decreases by at most $1$, so $H$ must have girth at
	least $\log_{d-1}(n)/3$, as desired.
\end{proof}

Equipped with the above lemmas we now complete our construction and hence prove Theorem \ref{thm:lb}.

\begin{proof}[Proof of Theorem \ref{thm:lb}] Let $d\ge 2$, and let $k$ be any integer larger than the $n$ required for Lemmas \ref{lem:pairing} and \ref{lem:gadget} to apply. Suppose $\e\in (0,1)$ is given. Let $t$ be the largest integer such that $(d+1)d^{t-1}\le k$.
	Choose $D-1=\lceil t/\e\rceil$ and let $T_1$ and $T_2$ be two disjoint $d-$ary trees with
	$D-1$ levels. Let $S_1$ and $S_2$ be the sets of vertices consisting of the top $t$ levels of $T_1$ and $T_2$
	respectively.
	
	Let $\lambda$ be an eigenvalue of $A_{T_1}$ and let $f$ be the
	corresponding normalized radial eigenvector, ($f$  has the same value on every vertex within a level) as in Lemma \ref{lem:eigvec}. By
	Lemma \ref{lem:eigvec}, we know 
	$$\|f_{S_1}\|_2^2=\|f_{S_2}\|_2^2=\Omega_\lambda(\e),$$
	where the implicit constant is $\Omega(\sin^4\theta)$ for $\lambda=2\cos\theta$.

	Construct $T_1'$ and $T_2'$ by attaching $d$ new {\em marked} vertices to each
	leaf of $T_1$ and $T_2$, respectively, so that they are $d-$ary trees of depth
	$D$, with $n=(d+1)d^{D-1}$ leaves each, corresponding to the marked vertices. 
	Apply Lemma \ref{lem:pairing} to pair these marked leaves; call the resulting graph $H$.
	Notice that $H$ is $d+1$-regular except for the marked vertices, which have degree two.  Applying
	Lemma \ref{lem:gadget} with degree parameter $d+1$ and size $n$, we obtain a disjoint collection of graphs $W_v$ (one for every marked vertex $v$) each with 
	a single distinguished vertex of degree $d-1$, all remaining vertices of degree $d+1$, and girth $\log_d(n)/3.$ Finally,
	let $G$ be the graph obtained by identifying each marked vertex $v$ with the distinguished vertex of $W_v.$
.

	Observe that $G$ has girth at least $\log_d(n)/4=\Omega(D)=\Omega(\log_d(k)/\e)$, since $H$ has girth at least this much by Lemma \ref{lem:pairing} and
	attaching disjoint copies of $W$ at single vertices does not create any new cycles.

Using the symmetry in the above construction we now prescribe an eigenvector of $G.$ Let $\nu$ be the function equal to $f$ on vertices of $T_1$, $-f$ on vertices of $T_2$, and zero elsewhere. We claim
	that $\nu$ is an eigenvector of $G$ with eigenvalue $\lambda$. 
	To see this one has to verify the eigenvector equation at vertices of three kinds:
	\begin{itemize}
	\item At every vertex of $T_1$ and $T_2$ because all new neighbors of those vertices are assigned a value of $0$ in $\nu$.
	\item It is also satisfied at the marked vertices, because every such
		vertex is adjacent to exactly one leaf in $T_1$ and one leaf in $T_2$.
	which have the same values with opposite signs,
	\item The remaining vertices in copies of $W$ have value zero, so the eigenvector equation is trivially satisfied.
	\end{itemize}	
	 
	Observing that $\|\nu_{S_1\cup S_2}\|_2^2=\Omega_\lambda (\e)$ with $|S_1\cup S_2|=2k$ finishes the construction.
	Since this construction is valid for infinitely many $n$,  { Lemma \ref{lem:dense} implies that the set of radial eigenvalues
	 is dense in $(-2\sqrt{d},2\sqrt{d})$, as desired.}

\end{proof}
\begin{figure}
\centering
\includegraphics[scale=.7]{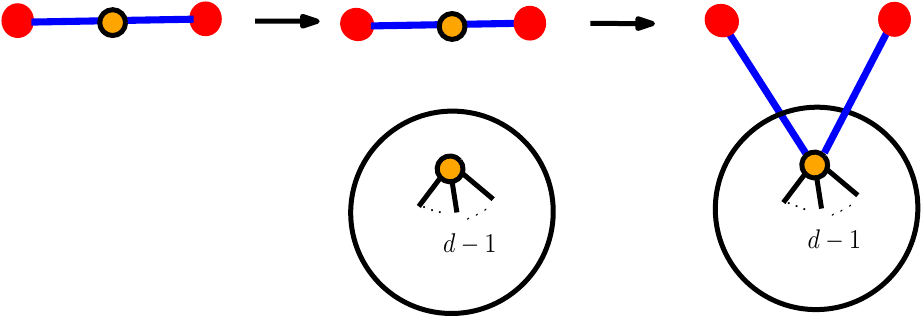}
\caption{This illustrates the use of the degree correcting gadget.  The two red vertices in the first figure denote two leaf vertices in $T_1$ and $T_2$ respectively  connected via the identified marked vertex (see Figure \ref{pairing}). The marked  vertex (yellow)  is then identified with a degree $d-1$ vertex in the gadget.}
\label{gadget}
\end{figure}

\begin{remark}\label{ana2}  {Comparing the results in  this paper to the ones in \cite{anantharaman}, we wish to make the following two remarks. 

$\bullet$The results in \cite{anantharaman} imply that in a certain
	quantitative way, all but a vanishing fraction of eigenfunctions are
	close to a constant vector in a weak sense. In contrast, in this
	article we show that the radial eigenvectors of
	finite trees lead to localized eigenvectors for
	certain $d+1$ regular graphs with high girth. However, as discussed in
	Lemma \ref{sym1}, the number of such eigenvectors is rather small,
	 {and constitutes a vanishing fraction}.  There could be other
	non-radial localized eigenvectors as well, but as the results in
	\cite{anantharaman} show, if the graph $G$ is an expander and has few
	short cycles, such eigenvectors must form a vanishing fraction of all
	eigenvectors. }

$\bullet$ Note that in the above proof, even though the graph $H$ obtained from
	gluing $T_1$ and $T_2$  has nice expansion properties, the final graph
	$G$ obtained from $H$ by adding the degree correcting gadgets are not
	expanders since the gadgets only have an edge boundary of size $2$.
	 {We outline a more complicated alternate approach which could
	conceivably produce high girth expanders in Section \ref{expan1}, after
	the proof of Lemma \ref{lem:pairing}, but do not pursue this in the present paper.}
 \end{remark}

\subsection{Proof of Lemma \ref{lem:pairing}}
\noindent {\em Probabilistic Model.} Our construction is probabilistic, and inspired by the
switching argument of \cite{MWW}, but with much cruder estimates since we are
not interested in precise asymptotic enumeration, but only in showing that a
certain probability is not zero. 
Let $T_1$ and $T_2$ be two $d-$ary trees of depth $D$, each with exactly
$n=(d+1)d^{D-1}$ leaf vertices, henceforth denoted $V_1$ and $V_2$. Consider
the random graph $G$ obtained by taking the union of $T_1$ and $T_2$ and a  {random} perfect
matching between $V_1$ and $V_2$ ( {the edges induced by the matching will henceforth be called matching edges}). \\

\noindent {\em Graph-Theoretic Terminology.} A {\em cycle} in a graph is an oriented closed walk with no repeated edges. We
will consider cyclic shifts and reversals of a cycle to be the same cycle. 
 {Note that because no edge is repeated, 
a cycle in $G$ has a natural decomposition into a alternating sequence of paths and matching edges (will be called matching edge traversals to incorporate the orientation as well), where the paths are entirely a subset of $T_1$ or $T_2$ alternately.} Naturally we will call such paths as tree excursions. We will formally denote this as a sequence of alternating {\em
matching edge traversals} $e_i$ and {\em tree excursions} $\gamma_i$ (see Figure \ref{pairing}): 
$$ e_1,\gamma_1,e_2,\gamma_2,\ldots,e_{k},\gamma_k$$ 
(or equivalently
$\gamma_1,e_2,\gamma_2,\ldots,e_{k},\gamma_k, e_1$),
where the $\gamma_i$ are simple paths in either $T_1$ or $T_2$ with endpoints at leaves, and $\gamma_k$ ends where
$e_1$ begins. We will follow the convention that $\gamma_1,\gamma_3,\ldots$ are
excursions in $T_1$ and $\gamma_2,\gamma_4,\ldots$ are excursions in $T_2$. The total number 
of edges in a cycle will be called its {\em length}.

We begin by establishing some preliminary facts about short cycles in $G$.
\begin{lemma}[Number and Overlaps of Short Cycles]\label{lem:overlap}  Let $c<1/2$ be a constant. Then for sufficiently large $n$,
	with  {positive} probability ( {independent of $n$}) we have both of the following properties simultaneously: 
	\begin{enumerate}
	\item $G$ does not contain two cycles of  {length at most $L,$} which share a {matching} edge,
	\item $G$ contains at most $B=O(n^{c(1+o(1))})$ cycles of length at most $L$,
	\end{enumerate}
	where $L:=2c\log_d(n)$.
\end{lemma}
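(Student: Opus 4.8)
The plan is to prove both assertions by the first-moment method applied to combinatorial \emph{shapes} of short cycles: enumerate all potential cycles of length at most $L$ (resp.\ all potential pairs of such cycles sharing a matching edge) as shapes in $G$, bound the probability that the random matching $\pi:V_1\to V_2$ realizes each shape, multiply and sum, and conclude via Markov's inequality. (For part (1) it suffices, and is all that the subsequent switching argument uses, to rule out two cycles \emph{of length at most $L$} sharing a matching edge.)

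There are two elementary inputs. First, by the alternating decomposition a shape of a cycle of length $\ell$ with $k$ matching edges is given by $k$ tree excursions --- simple leaf-to-leaf paths in $T_1$ or $T_2$, of even lengths summing to $\ell-k$ --- together with a bounded amount of bookkeeping for how consecutive excursions are joined; and the number of simple leaf-to-leaf paths of even length $s$ starting at a fixed leaf of a $d$-ary tree of depth $D$ is at most $d^{s/2}$, because the apex of such a path must be the unique depth-$(D-s/2)$ ancestor of that leaf and only the far endpoint is free. Second, for $k=O(\log n)$ prescribed leaf-pairs with distinct $V_1$- and $V_2$-coordinates, $\mathbb{P}[\pi\text{ contains all }k]=(n-k)!/n!=n^{-k}(1+o(1))$. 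Combining these, the number of shapes of length $\ell\le L$ with $k$ matching edges is at most $n^{k}\,d^{(\ell-k)/2}\,n^{o(1)}\le n^{k}\,d^{L/2}\,n^{o(1)}=n^{k+c}\,n^{o(1)}$: the $n^{k}$ counts one free endpoint per excursion, the $d^{(\ell-k)/2}$ counts the apex-constrained endpoints, and $d^{L/2}=n^{c}$. Multiplying by $n^{-k}(1+o(1))$ and summing over $k$ and over $\ell\le L$ yields $\mathbb{E}[\#\{\text{cycles of length}\le L\}]\le n^{c(1+o(1))}$; taking $B$ to be a large constant multiple of this bound, Markov's inequality makes (2) hold with probability at least, say, $0.9$.

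The main work --- and what I expect to be the real obstacle --- is (1), since a naive union bound over \emph{pairs} of cycles diverges (cycles can be long) and one must use structure. If $C_1\neq C_2$ share a matching edge then $U:=C_1\cup C_2$ is connected, and writing $C_1\cap C_2$ as a disjoint union of $p\ge 1$ paths, the Euler-characteristic identity $\chi(U)=\chi(C_1)+\chi(C_2)-\chi(C_1\cap C_2)=-p$ forces the cyclomatic number $\mu(U)=1-\chi(U)=p+1\ge 2$. Let $F$ be the subforest of $U$ obtained by deleting its $j$ matching edges, and $\kappa$ its number of components; applying $\chi=V-E$ to $U$ and to $F$ gives the bookkeeping identity $\mu(U)=j-\kappa+1$, hence $\kappa=j-\mu+1\le j-1$. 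Thus the $j$ matched leaf-pairs of $U$ lie in only $\kappa\le j-1$ tree components --- one fewer free degree of freedom than a single cycle has --- and this is exactly the gain that is needed. Enumerating such $U$ (place its $\kappa$ tree components, each a union of $O(\log n)$ excursions of total length at most $2L$, in $n^{\kappa}\,n^{O(c)}$ ways --- one free leaf plus the constrained excursion endpoints per component; match up their $\le 2j$ leaves in $n^{o(1)}$ ways; and pay $n^{-j}(1+o(1))$ for the matching constraints) gives $\mathbb{E}[\#\{\text{such }U\}]\le n^{\kappa-j+O(c)+o(1)}=n^{1-\mu+O(c)+o(1)}\le n^{-1+O(c)+o(1)}$, which tends to $0$ since $c<\tfrac12$ --- the $n^{O(c)}$ here comes from the lengths of the excursions making up $U$ (all controlled by $L=2c\log_d n$), and $c<\tfrac12$ ensures it is dominated by the single saved power of $n$. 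A union bound over the $\mathrm{polylog}(n)$ choices of $j$ and the $n^{o(1)}$ combinatorial skeletons of $U$ keeps this $o(1)$, so (1) holds with probability $1-o(1)$; intersecting with the event from (2) completes the proof.
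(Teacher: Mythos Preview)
Your treatment of part (2) is essentially the paper's: both enumerate potential cycle shapes by excursions and matching edges, multiply by $(n-k)!/n!$, and apply Markov. For part (1), however, the paper takes a much more direct route. It observes that if two cycles of length $\le L$ share a matching edge $e$, then either endpoint $v$ of $e$ lies in at least two short cycles; since a leaf has only one tree edge and one matching edge, both must lie in any cycle through $v$, so the shared edge is forced. The paper then simply bounds $\P[v\text{ lies in }\ge 2\text{ short cycles}]$ for a \emph{fixed} leaf $v$ by enumerating ordered pairs $(C_1,C_2)$ through $v$ --- each $C_i$ separately via the leaf-to-leaf excursion count $d^{h/2}$ --- obtaining $O(n^{2c(1+o(1))-2})$, and sums over the $n$ leaves. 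The point is that enumerating the two cycles \emph{separately} keeps every excursion a genuine leaf-to-leaf path, so the $d^{h/2}$ bound applies cleanly and the total tree-edge contribution is $d^{L}=n^{2c}$, which is exactly what makes $c<\tfrac12$ the right threshold.

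Your Euler-characteristic route for part (1) is conceptually nicer and handles arbitrary overlap patterns in one stroke via $\mu(U)\ge 2$, but the enumeration step hides a real difficulty. Once you pass to tree components of $F=U\setminus\{\text{matching edges}\}$, a component can have $\ell\ge 3$ leaves (this happens already for a theta graph, $p=1$, since the two branch vertices of $U$ are internal tree vertices of degree $3$ in $F$). Adding the third and later leaves is \emph{not} an excursion in your sense: the new leaf is attached to an internal vertex of the tree-so-far by a path that may be entirely downward, costing $d^{h}$ rather than $d^{h/2}$ choices. With that honest count, the tree-edge contribution is $d^{|E(F)|}\le d^{2L}=n^{4c}$, and your final bound becomes $n^{\kappa-j+4c}\le n^{-1+4c}$, which is $o(1)$ only for $c<\tfrac14$. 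Your assertion that ``$c<\tfrac12$ ensures it is dominated'' is therefore not justified by the enumeration you wrote down. This is not fatal --- the only application in the paper takes $c=\tfrac14$ --- but to recover the full range $c<\tfrac12$ you should either refine the component count (which is delicate) or, more simply, enumerate $(C_1,C_2)$ separately as the paper does, so that every excursion stays leaf-to-leaf.
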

\begin{proof}
	Let $v\in V_1$ be a leaf vertex. We will first show that
	\begin{equation}\label{eqn:p1}\P[\textrm{$v$ occurs in $\ge 1$ cycle of length $\le L$}]=O(n^{-1+c(1+o(1))}).\end{equation}
	Call a cycle that occurs in $G$ with nonzero probability a {\em potential cycle}.
	Every potential cycle consists of $k$ matching traversals and $k$ tree
	excursions for some even $k$. Observe that every excursion of length $h$ has even length and consists
	of $h/2$ upward steps towards the root of the tree and $h/2$ downward
	steps back down to the leaves. 
	
	Given a starting vertex for the excursion, the upward steps are uniquely determined, and there are at most $d$ choices for each of the downward steps (since backtracking is not allowed, and the root has degree $d+1$). Since there are at most $n$ 
	choices for each matching traversal given one of its endpoints, the total 
	number of potential cycles containing $v$ with exactly $k$ matching traversals and excursions
	of lengths $h_1,\ldots,h_k$ is at most:
	\begin{equation}\label{eqn:exc}d^{h_1/2}\cdot n\cdot d^{h_2/2}\cdot n\ldots d^{h_k/2} = d^{(h_1+\ldots+h_k)/2}\cdot n^{k-1}\le n^{k-1+c},\end{equation}
	since the last matching traversal is determined by the starting vertex $v$.
	Every such potential cycle fixes $k$ matching edges, so the probability that it occurs in a random matching is at most
	$(n-k)!/n!$. Taking a union bound over all $k\le L$, ordered partitions $h_1+\ldots+h_k\le L-k$, and potential cycles
	with those parameters, we have
	$$\P[\textrm{$v$ occurs in a cycle of length $\le L$}]\le \sum_{k=1}^L e^{O(\sqrt{L})}k!\cdot\frac{n^{k-1+c}(n-k)!}{n!}=O(n^{-1+c(1+o(1))}).$$

	Next, we use a similar argument to estimate the probability that any {leaf} vertex $v$ occurs in more than one cycle. Fix $v\in V_1$ 
	and observe that a pair of potential cycles of length at most $L$ both containing $v$ can be specified by the following choices:
	\begin{itemize}
		\item Lengths $s,s'\le L$, matching traversal counts $k,k'\le
			L$, and tuples of excursion lengths $h_1,\ldots,h_k$
			and $h_1',\ldots,h_{k'}'$ for both cycles.
		\item The common matching edge $e$ incident to $v$ contained in both cycles.
		\item The excursions made in both cycles.
		\item The remaining $k-2$ and $k'-2$ matching edges in both cycles (noting that the final edge is not required
			once all excursions are specified).
	\end{itemize}
	Since any particular pair fixes $k+k'-1$ matching edges, the
	probability that it occurs in $G$ is at most $(n-(k+k'-1))!/n!$.
	Bounding excursions as in \eqref{eqn:exc} and taking a union bound, we
	have
	\begin{align*}\label{eqn:p2}&\P[\textrm{$v$ occurs in $\ge 2$ cycles of length $\le L$}]
	\\&\le L^4e^{O(\sqrt{L})}\cdot n\cdot n^{2c}\cdot \sup _{k,k'}\left (k!(k')! n^{k+k'-4}\cdot\frac{(n-(k+k'-1))!}{n!}\right)
		\\&=O(n^{2c(1+o(1))-2}).
	\end{align*}
Taking a union bound over all leaf vertices $v$, and observing that cycles in $G$ share a matching edge  if and only if they pass through a common leaf vertex we conclude that
	$$\P[\textrm{$G$ contains two  cycles {sharing a matching edge}, of length $\le L$}]=O(n^{2c(1+o(1))-1})=o(1).$$

For the second claim we sum \eqref{eqn:p1} over all $v\in {V_1}$ and apply Markov's inequality to obtain:
	$$\P[|V_{C}|>O(n^{c(1+o(1))})]<1/2,$$
	where $|V_{C}|$ denotes the set of vertices contained in at least one cycle of length $\le L$. Taking a union bound
	with our previous conclusion, we have that with probability $1/3$ all cycles in $G$ are {matching edge} disjoint and $|V_C|=O(n^{c(1+o(1))})$. 
	Since every cycle contains at least one vertex, this gives the second claim.
\end{proof}

\begin{proof}[Proof of Lemma \ref{lem:pairing}] Let $c=1/4$ and $L:=2c\log_d n$, and choose $n$ sufficiently large so that 
	Lemma \ref{lem:overlap} applies with $B=O(n^{c(1+o(1))})\le n^{1/3}$. Let $\Gamma$ be the set of graphs in the support of the random variable $G,$  {(i.e., the set of all the graphs which $G$ is equal to with nonzero probability)} 
	such that both conditions of Lemma \ref{lem:overlap} are satisfied. Clearly the support size is equal to the number of matchings which is $n!$.  {Now, since by Lemma \ref{lem:overlap} $\P(G\in \Gamma)>0$ (independent of $n$), it follows that,}
	\begin{equation}\label{eqn:factorial} |\Gamma|=\Omega(n!).\end{equation}
	For integers $z_2,z_4,\ldots,z_L\le B$ let
	$$\Gamma(z_2,\ldots,z_L)$$
	denote the subset of $\Gamma$ containing graphs with exactly $z_j$ cycles of length $j$, noting that
	in our model there are never any odd cycles.

	Our goal is to show that $\Gamma(0,\ldots,0)$ is not empty.
	Following \cite{MWW}, our strategy will be to establish the following two claims: Let $[B]=\{0,1,\ldots,B\}.$ 

	\begin{claim}\label{cl1} There exists a $z^*\in [B]^L$ such that
		$$|\Gamma(z^*_2,\ldots,z^*_L)|\ge \exp(\Omega(n\log n)).$$
	\end{claim}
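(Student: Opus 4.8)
\textbf{Proof proposal for Claim \ref{cl1}.}
The plan is to obtain Claim \ref{cl1} by a direct pigeonhole (averaging) argument over the partition of $\Gamma$ into the classes $\Gamma(z_2,z_4,\ldots,z_L)$. The point is that this partition has very few parts compared to the size of $\Gamma$, so one of them must be large.

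First I would observe that the classes $\Gamma(z_2,\ldots,z_L)$ genuinely partition $\Gamma$: every graph in the support of $G$ has a well-defined count of cycles of each (even) length $j\le L$, and by condition (2) of Lemma \ref{lem:overlap} each such count lies in $[B]=\{0,1,\ldots,B\}$ for a graph in $\Gamma$. Hence the number of nonempty classes is at most $(B+1)^{L/2}\le (B+1)^{L}$, since only even lengths $j\le L$ occur. Combining this with the lower bound $|\Gamma|=\Omega(n!)$ from \eqref{eqn:factorial} and pigeonhole, there exists a tuple $z^*\in[B]^L$ with
$$|\Gamma(z^*_2,\ldots,z^*_L)|\ \ge\ \frac{|\Gamma|}{(B+1)^L}\ \ge\ \frac{\Omega(n!)}{(B+1)^L}.$$

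It then remains to check that the denominator is negligible on the exponential scale $n\log n$. Since $B=O(n^{c(1+o(1))})\le n^{1/3}$ and $L=2c\log_d n=O(\log n)$, we have $(B+1)^L\le \exp(O(\log^2 n))$, while $n!=\exp(\Theta(n\log n))$ by Stirling. As $\log^2 n = o(n\log n)$, the bound above becomes $\exp\bigl(\Theta(n\log n)-O(\log^2 n)\bigr)=\exp(\Omega(n\log n))$, which is exactly the claim.

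I do not expect any real obstacle here; the only thing requiring care is the verification that the number of parts $(B+1)^L$ is subexponential in $n\log n$, which is precisely why Lemma \ref{lem:overlap} is stated with a polynomial bound $B=n^{O(1)}$ and with the window $L=\Theta(\log n)$ rather than something larger. (The substantive work of the argument is deferred to Claim~2, where the switching argument shows that passing from $\Gamma(z^*)$ to $\Gamma(0,\ldots,0)$ loses only a subexponential factor.)
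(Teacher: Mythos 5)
Your proof is correct and is essentially the same pigeonhole argument the paper uses: divide $|\Gamma| = \Omega(n!)$ by the number of possible tuples (at most $B^L$ or $(B+1)^L$, either works) and observe that this denominator is only $\exp(O(\log^2 n))$, which is negligible against $\exp(\Theta(n\log n))$. The paper states this in a single line; you have merely spelled out the same reasoning in more detail.
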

	\begin{claim}\label{cl2}
		For every $z\in [B]^L$ such that $z_k>0$:
		$$\frac{|\Gamma(z_2,\ldots,z_{k}-1,\ldots,z_L)|}{|\Gamma(z_2,\ldots,z_k,\ldots,z_L)|}= \Omega(n^{-3c}).$$
	\end{claim}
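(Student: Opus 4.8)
The plan is to run the switching method of \cite{MWW} in a crude form, comparing the two strata through a single re-wiring move on the matching. Fix $z\in[B]^L$ with $z_k>0$ and abbreviate $\Gamma^{+}:=\Gamma(z_2,\ldots,z_k,\ldots,z_L)$ and $\Gamma^{-}:=\Gamma(z_2,\ldots,z_k-1,\ldots,z_L)$. I would form a bipartite multigraph $\mathcal S$ between $\Gamma^{+}$ and $\Gamma^{-}$ whose edges are the \emph{forward moves}: given $G\in\Gamma^{+}$, pick a matching edge $e=(a,b)$ lying on one of the $z_k\ge 1$ cycles of length $k$ of $G$, pick a second matching edge $f=(c,d)$ that lies on no cycle of length $\le L$ and neither of whose endpoints is within distance $L$ of an endpoint of $e$, then delete $e,f$ and add $(a,d),(c,b)$. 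Granting that every forward move outputs a graph in $\Gamma^{-}$, the identity $\sum_{G\in\Gamma^{+}}\deg_{\mathcal S}(G)=|E(\mathcal S)|=\sum_{G'\in\Gamma^{-}}\deg_{\mathcal S}(G')$ gives
$$\frac{|\Gamma^{-}|}{|\Gamma^{+}|}\ \ge\ \frac{\min_{G\in\Gamma^{+}}\deg_{\mathcal S}(G)}{\max_{G'\in\Gamma^{-}}\deg_{\mathcal S}(G')},$$
so it suffices to bound these degrees below by $\Omega(n)$ and above by $O(n^{1+3c})$.

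For the lower bound I would first show that a forward move with an admissible $f$ is \emph{clean}. Because short cycles are matching-edge-disjoint (part 1 of Lemma \ref{lem:overlap}), the edge $e$ lies only on the single cycle $C$ of length $\le L$ that we chose, so deleting $e$ destroys $C$ and no other short cycle, while deleting $f$ destroys none; moreover no cycle of length $\le L$ is created, since such a cycle would use a new edge $(a,d)$ or $(c,b)$ and would therefore force either a short path between $a$ (or $b$) and $d$ (or $c$) --- impossible by the distance condition on $f$ --- or a short path from $c$ to $d$ avoiding $f$ --- impossible since $f$ lies on no short cycle; here one also uses that $T_1,T_2$ are acyclic. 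Hence the output lies in $\Gamma^{-}$, and this cleanliness check is the main obstacle: one has to exclude every way the two re-wired edges could lie on a short cycle of the new graph, which is precisely where both conclusions of Lemma \ref{lem:overlap} are used. For the count, the admissible $f$ are all matching edges except those on a short cycle and those with an endpoint within distance $L$ of an endpoint of $e$; bounding the number of vertices within distance $L$ of a leaf by the number of paths of length $\le L$ from it (each of which alternates tree excursions with matching edges, hence contributes at most $d^{L/2}\cdot 2^{L}\le n^{3c}$, since the upward tree steps and all matching edges are forced, each of the at most $L/2$ downward steps has at most $d$ choices, and there are at most $2^{L}$ ways to apportion the length among the excursions), this exceptional set has size $O(|V_C|)+O(n^{3c})=o(n)$ by Lemma \ref{lem:overlap} (using $c<1/3$, as holds in the application with $c=1/4$). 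Thus $\deg_{\mathcal S}(G)\ge n/2$ for large $n$.

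For the upper bound, a forward move producing a fixed $G'$ is determined by the ordered pair of leaves $(a,b)\in V_1\times V_2$ whose matching edges get re-paired --- in $G'$ the partners of $a$ and $b$ recover $d$ and $c$, hence $G$ --- and it can be valid only if re-pairing creates a cycle of length $k$ through $(a,b)$, i.e.\ only if there is a walk of length $k-1$ from $b$ to $a$, alternating excursions with matching edges, in $G'$ with at most one matching edge altered. By the same forced-walk estimate, for each fixed $b$ there are at most $n^{3c}$ possible endpoints $a$, so $\deg_{\mathcal S}(G')\le n\cdot n^{3c}=O(n^{1+3c})$. Combining the two bounds, $|\Gamma^{-}|/|\Gamma^{+}|\ge (n/2)/O(n^{1+3c})=\Omega(n^{-3c})$, as claimed.
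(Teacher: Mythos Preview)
Your proof is correct and follows essentially the same switching argument as the paper: define a forward move that breaks one $k$-cycle by re-pairing two matching edges, lower-bound the number of forward moves by $\Omega(n)$ (few edges are excluded), upper-bound the number of preimages by $O(n^{1+3c})$ (the broken cycle forces $a$ to lie in a ball of radius $\le L$ around $b$), and conclude by double counting. The only cosmetic differences are that the paper fixes $e$ to be the lexicographically first matching edge on the lexicographically first $k$-cycle (you allow any such $e$, which is harmless since your lower bound uses just one choice) and requires $f$ to be at distance $2L$ rather than $L$ from $e$; your slightly tighter distance condition still suffices for the cleanliness check you outline.
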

	Iterating the above claims yields
	$$|\Gamma(0,\ldots,0)|\ge \exp\left(Cn\log n-O(\log(n))\sum_{i=2,\ldots L} z_i\right)=\exp(Cn\log n-\Omega(n^{1/3}\log^2 n))>1,$$
	so that with nonzero probability $G$ has no cycles of length at most $L$. Contracting all matching edges shrinks the length
	of every cycle by at most a factor of $2$, yielding the desired pairing.

	To establish Claim \ref{cl1}, we observe that the tuple $z^*\in [B]^L$ which maximizes $|\Gamma(z^*)|$ must have cardinality at least
	$$\frac{\Omega(n!)}{B^L}\ge\Omega\left(\exp\left(n\log n-O(n)-O(\log^2 n)\right)\right).$$

\begin{figure}
\centering
\includegraphics[scale=.5]{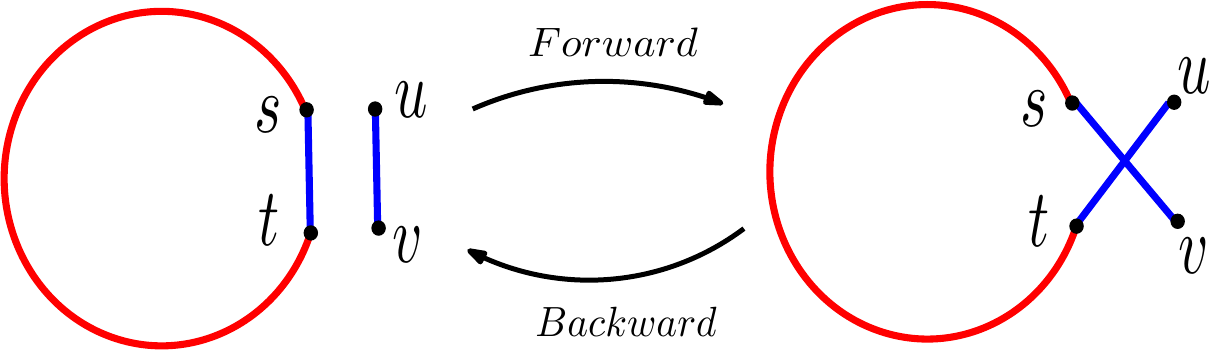}
\caption{This illustrates the switching argument: forward switching takes a cycle and an edge and produces a path while the backward switching does the reverse.}
\label{fwbw}
\end{figure}

	For Claim \ref{cl2} we use a switching argument i.e., we will switch some edges in a graph in $\Gamma(z_2,\ldots, z_k,\ldots, z_L)$ to construct a graph in $\Gamma(z_2,\ldots, z_k-1,\ldots, z_L)$. Given a graph $H\in
	\Gamma(z_2,\ldots, z_k,\ldots, z_L)$ with $z_k>0$, a {\em forward
	switching} is defined as the following operation:
	\begin{itemize}
		\item Choose the lexicographically\footnote{Fix two  { arbitrary orderings of edges and cycles}.} first matching edge $e=st$ in the lexicographically first cycle $C$ of length $k$ in $H$.
	\item Choose any matching edge $f=uv$ in $H$ at distance at least $2L$ from $e$ which is not contained in any cycle of length at most $L$.
	\item Remove $st$ and $uv$ from the matching and add $sv$ and $ut$.
	\end{itemize}
	Observe that since every {matching} edge is contained in {at most} one cycle of length $\le L$ and $f$ does not belong to any cycle of length $\le L$, removing $e$ and $f$ destroys
	only the cycle $C$ among all the cycles of length $\le L$. Since the endpoints of $e$ and $f$ are at distance $2L$ in $H$, adding $sv$ and $ut$ does not create any cycles 
	of length at most $L$. Thus, the outcome of a forward switching is a graph
	$H'\in \Gamma(z_2,\ldots, z_k-1,\ldots,z_L)$, which has exactly the same cycle counts except with one less cycle of length $k$.

	Let $\F(H)$ denote the set of foward switchings of a graph $H\in\Gamma(z_2,\ldots,z_L)$. Observe that the only choice in the switching
	is the choice of the second matching edge $f$. The number of matching edges contained in cycles of length at most $L$ is bounded by $BL=o(n)$ and the number of edges within distance $2L$ of $e$ is at most $(d+1)^{2L}=o(n)$. Therefore, for every $H\in \Gamma(z_2,\ldots,z_L)$, we have
	$$|\F(H)|=\Omega(n).$$	

	We now investigate how many forward switchings can map to a given graph $H'$ in
	$\Gamma(z_2,\ldots,z_k-1,\ldots,z_L)$. 
	Given such an $H'$, a {\em backward switching} is defined as the following operation:
	\begin{itemize}
	\item Choose two vertices $u$ and $v$ at distance exactly $k+1$ in
		$H'$, such that (a) the extreme edges $ut$ and $vs$ of the
			$uv-$path $p$ are matching edges. (b) The distance
			between $u$ and $v$ in $H'$ along any path other than
			$p$ is at least $L$.
	\item Delete the edges $ut$ and $vs$ from the matching and add edges $st$ and $uv$.
	\end{itemize}
	Observe that a backward switching always yields a graph $H\in\Gamma(z_2,\ldots,z_L)$, and that all graphs $H$
	with a forward switching equal to $H'$ may be achieved in this manner. The number of backward switchings of
	any graph $H'$ is upper bounded by
	$$|\B(H')|\le n\cdot (d+1)^{L+1} = O(n^{1+3c}),$$
	where we have overcounted by ignoring the conditions (a) and (b) in the definition of a backward switching.

	A double counting argument now yields:
	$$\frac{|\Gamma(z_2,\ldots,z_{k}-1,\ldots,z_L)|}{|\Gamma(z_2,\ldots,z_k,\ldots,z_L)|}\ge \frac{\min_H|\F(H)|}{\max_{H'}|\B(H')|}=\Omega(n^{-3c}),$$
	as desired.

\end{proof}

\subsection{Constructing expanders: a possible approach}\label{expan1} 
 {We will follow the notations in the proof of Theorem \ref{thm:lb} and assume for simplicity that $d+1=2k$ for some positive integer $k.$ Consider $T_1, T_1'$ and $T_2,T_2'$ as in the proof of  Theorem \ref{thm:lb}. For our purposes we will require $k$ copies of $T_1'$ as well as $T_2'.$
Let us call them $T'_{1,1}, T'_{1,2},\ldots T'_{1,k}$ and similarly $T'_{2,1}, T'_{2,2},\ldots T'_{2,k}.$

Now the number of leaves in $T_1'$ is equal to $dm$ where $m$ is the number of leaves in $T_1.$ It would be useful to think of the the leaves of $T'_1$ as half-edges hanging from the leaves of $T_1$ and similarly for $T_2$ and $T_2'.$
Also let us denote the natural copy of $T_1$ sitting inside $T'_{1,i}$  by $T_{1,i}$ and similarly we define the notation $T_{2,i}.$

Moreover, consider 
$(2k-1)m$ new vertices $W=\{w_1,w_2,\ldots, w_{(2k-1)m}\}$ with  $k$ half edges on each of them. 
Similarly  consider $(2k-1)m$ new vertices $W'=\{w'_1,w'_2,\ldots, w'_{(2k-1)m}\}$ with  $k$ half edges on each of them as well. 

Now as before we glue the half edges on the leaves of $\{T'_{1,i}:i=1,2,\ldots k\}$ with the half edges on the vertices in $W$ using a random matching. Note that the number of half edges incident on $\{T'_{1,i}:i=1,2,\ldots k\}$ is $kdm$ which is precisely the number of half edges incident on $W.$
Similarly we glue the half edges on the leaves of $\{T'_{2,i}:i=1,2,\ldots k\}$ with the half edges on the vertices in $W'$ using an independent random matching.

Finally to obtain a high girth graph $H,$ identify every vertex $w_i$ with the vertex $w_i'$ and call it say, $u_i.$ Thus the degree of $u_i$ is the sum of the degrees of $w_i$ and $w_i'$ which is $d+1.$ Thus the resulting graph $H$ is a $d+1$ regular graph on the vertex set formed by the union of the vertices in $\{T_{1,i}, T_{2,i}: i=1,2,\ldots,k\}$ as well as $U=\{u_1,u_2,\ldots, u_{(2k-1)m}\}.$

A localized eigenvector can now be constructed as following: pick a radial eigenvector $f$ corresponding to some radial eigenvalue $\lambda$ of $A_{T_1}.$ Now consider the function $\nu$ that is equal to  $f$ on each copy $T_{1,i}$ and is equal to  $-f$ on each copy $T_{2,i}$ and is $0$ on each $u_i.$ It can be easily checked that the $\nu$ is an eigenvector corresponding to eigenvalue $\lambda$ for the large graph $H.$ Thus the above construction avoids the gadgets. However, that the matchings indeed lead to a high girth graph with nonzero probability as in Lemma \ref{lem:pairing} still remains to be checked and will not be pursued here.  \\

The above discussion suggests that the general strategy of gluing trees could lead to various interesting constructions and investigating them and other related phenomena is left for future work. }  \\

\noindent {\bf Acknowledgment.} We would like to thank Assaf Naor and Mark Rudelson for helpful conversations, and MSRI and the Simons Institute
for the Theory of Computing, where this work was partially carried out. We are also grateful to two anonymous referees whose comments greatly improved the paper. 
\bibliographystyle{alpha}
\bibliography{bl}

\begin{thebibliography}{MWW04}

\bibitem[ABLS07]{noga}
Noga Alon, Itai Benjamini, Eyal Lubetzky, and Sasha Sodin.
\newblock Non-backtracking random walks mix faster.
\newblock {\em Commun. Contemp. Math.}, 9(04):585--603, 2007.

\bibitem[ALM15]{anantharaman}
Nalini Anantharaman and Etienne Le~Masson.
\newblock Quantum ergodicity on large regular graphs.
\newblock {\em Duke Math. J.}, 164(4):723--765, 2015.

\bibitem[BHY]{yau}
Roland Bauerschmidt, Jiaoyang Huang, and Horng-Tzer Yau.
\newblock Local kesten--mckay law for random regular graphs, preprint (2016).
\newblock {\em arXiv preprint arXiv:1609.09052}.

\bibitem[BL13]{bl}
Shimon Brooks and Elon Lindenstrauss.
\newblock Non-localization of eigenfunctions on large regular graphs.
\newblock {\em Israel J. Math.}, pages 1--14, 2013.

\bibitem[Bro09]{brooks}
Shimon Brooks.
\newblock {\em Entropy bounds for quantum limits}.
\newblock Princeton University, 2009.

\bibitem[BS16]{szegedy}
{\'A}gnes Backhausz and Bal{\'a}zs Szegedy.
\newblock On the almost eigenvectors of random regular graphs.
\newblock {\em arXiv preprint arXiv:1607.04785}, 2016.

\bibitem[DLL11]{dekel}
Yael Dekel, James~R Lee, and Nathan Linial.
\newblock Eigenvectors of random graphs: Nodal domains.
\newblock {\em Random Structures Algorithms}, 39(1):39--58, 2011.

\bibitem[ERS17]{eldan2017braess}
Ronen Eldan, Mikl{\'o}s~Z R{\'a}cz, and Tselil Schramm.
\newblock Braess's paradox for the spectral gap in random graphs and
  delocalization of eigenvectors.
\newblock {\em Random Structures Algorithms}, 50(4):584--611, 2017.

\bibitem[Gei13]{geisinger}
Leander Geisinger.
\newblock Convergence of the density of states and delocalization of
  eigenvectors on random regular graphs.
\newblock {\em arXiv preprint arXiv:1305.1039}, 2013.

\bibitem[HLW06]{hoory}
Shlomo Hoory, Nathan Linial, and Avi Wigderson.
\newblock Expander graphs and their applications.
\newblock {\em Bull. Amer. Math. Soc.}, 43(4):439--561, 2006.

\bibitem[Kah92]{kahale}
Nabil Kahale.
\newblock On the second eigenvalue and linear expansion of regular graphs.
\newblock In {\em 33rd Annual Symposium on Foundations of Computer Science,
  1992. Proceedings.,}, pages 296--303. IEEE, 1992.

\bibitem[Lin06]{lindenstrauss}
Elon Lindenstrauss.
\newblock Invariant measures and arithmetic quantum unique ergodicity.
\newblock {\em Ann. of Math.}, pages 165--219, 2006.

\bibitem[LS10]{lsregular}
Eyal Lubetzky and Allan Sly.
\newblock Cutoff phenomena for random walks on random regular graphs.
\newblock {\em Duke Math. J.}, 153(3):475--510, 2010.

\bibitem[MWW04]{MWW}
Brendan~D McKay, Nicholas~C Wormald, and Beata Wysocka.
\newblock Short cycles in random regular graphs.
\newblock {\em Electron. J. Combin.}, 11(1):R66, 2004.

\bibitem[OVW16]{vusurvey}
Sean O'Rourke, Van Vu, and Ke~Wang.
\newblock Eigenvectors of random matrices: a survey.
\newblock {\em J. Combin. Theory Ser. A}, 144:361--442, 2016.

\bibitem[RS94]{rudnicksarnak}
Ze{\'e}v Rudnick and Peter Sarnak.
\newblock The behaviour of eigenstates of arithmetic hyperbolic manifolds.
\newblock {\em Comm. Math. Phys.}, 161(1):195--213, 1994.

\bibitem[Rud17]{rudsurvey}
Mark Rudelson.
\newblock Delocalization of eigenvectors of random matrices, 2017.

\bibitem[RV15]{rv1}
Mark Rudelson and Roman Vershynin.
\newblock Delocalization of eigenvectors of random matrices with independent
  entries.
\newblock {\em Duke Math. J.}, 164(13):2507--2538, 2015.

\bibitem[RV16]{rv2}
Mark Rudelson and Roman Vershynin.
\newblock No-gaps delocalization for general random matrices.
\newblock {\em Geom. Funct. Anal.}, 26(6):1716--1776, 2016.

\end{thebibliography}
\end{document}